\documentclass[11pt, a4paper, oneside]{amsart}


\usepackage{amsmath, amsthm, amsfonts, amssymb, mathrsfs}
\usepackage[all]{xy}
\usepackage{xcolor}
\usepackage{hyperref}
\usepackage{multirow}
\usepackage[hypcap=false,width=\columnwidth]{caption}
\usepackage{authblk}
\usepackage{tabularx}
\usepackage{float}
\usepackage[titletoc]{appendix}
\hypersetup{
colorlinks=true, 
linkcolor=blue, 
urlcolor=red, 
}
\usepackage{enumerate}
\usepackage[shortlabels]{enumitem}
\usepackage{enumitem} 
\usepackage{geometry}               
\usepackage[capitalise]{cleveref}


\geometry{a4paper}                  
\newfont{\sheaf}{eusm10 scaled\magstep1}


\def\N{\ensuremath{\mathbb N}}

\def\C{\ensuremath{\mathbb C}}
\def\P{\ensuremath{\mathbb P}}
\newcommand{\F}{\mathcal{F}}
\newcommand{\UU}{\mathbb{U}}
\def\L{\ensuremath{\mathcal L}}

\def\L{\ensuremath{\mathbf L}}
\newcommand{\KK}{\mathcal{K}}
\newcommand{\T}{\mathcal{T}}
\newcommand{\X}{\mathfrak{X}}

\newcommand{\ann}{\mathrm{ann}}
\newcommand{\II}{\mathscr{C}}

\newcommand{\codim}{\mathrm{codim}}


\newcommand\CC{{\mathbb{C}}}
\newcommand\RR{{\mathbb{R}}}

\newcommand\ZZ{{\mathbb{Z}}}

\newcommand\NN{{\mathbb{N}}}
\newcommand\PP{{\mathbb{P}}}
\def\L{\ensuremath{\mathbf L}}
\def\P{\ensuremath{\mathbb P}}
\newcommand\ii{{\imath}}

\def\D{{\mathcal D}}

\def\K{{\mathcal K}}
\def\L{{\mathcal L}}
\def\O{{\mathcal O}}

\def\U{{\mathcal U}}
\def\V{{\mathcal V}}
\def\W{{\mathcal W}}
\def\NN{{\mathcal{N}}}


\newtheorem{theorem}{Theorem}[section]
\newtheorem*{theorema}{Theorem A}
\newtheorem*{theoremb}{Theorem B}

\newtheorem{proposition}[theorem]{Proposition}

\newtheorem{lemma}[theorem]{Lemma}
\newtheorem{definition}[theorem]{Definition}
\newtheorem{corollary}[theorem]{Corollary}

\theoremstyle{definition}
\newtheorem{example}[theorem]{Example}
\newtheorem{remark}[theorem]{Remark}

\numberwithin{equation}{section}


\title{Rational pullbacks of toric foliations}

\makeatletter
\g@addto@macro{\endabstract}{\@setabstract}
\newcommand{\authorfootnotes}{\renewcommand\thefootnote{\@fnsymbol\c@footnote}}
\makeatother

\newenvironment{acknowledgement}{\textbf{Acknowledgments}}

\subjclass{14D20, 37F75, 14B10, 32S65, 14M25}


\begin{document}

	\begin{center}
	\LARGE\textbf{Rational pullbacks of toric foliations }\par \bigskip

\normalsize
\authorfootnotes
Javier Gargiulo Acea\footnote{The author was fully supported by CNPq, Brazil, the project Print - Institutional Internationalization Program - CAPES, Brazil and Instituto de Matemática Pura e Aplicada, Brazil.}\textsuperscript{1}, 
Ariel Molinuevo\footnote{The author was fully supported by Universidade Federal do Rio de Janeiro, Brazil.}\textsuperscript{2},
Sebasti\'an Velazquez\footnote{The author was fully supported by CNPq, Brazil and CONICET, Argentina.}\textsuperscript{3}

\

\footnotesize{
	\textsuperscript{1}Departamento de Matem\'atica Aplicada, Universidade Federal Fluminense, Brazil. \par
	\textsuperscript{2}Instituto de Matematica, Universidade Federal do Rio de Janeiro, Brazil. \par
	\textsuperscript{3}Instituto de Matem\'atica Pura e Aplicada, Brazil\par\bigskip}

\today
\end{center}

\let\thefootnote\relax\footnote{\textit{AMS subject classification 2010: 14D20, 37F75, 14B10, 32S65, 14M25.}}
\let\thefootnote\relax\footnote{\textit{Keywords: Toric varieties, singular foliations, moduli spaces, rational pullbacks.}}

\begin{abstract}
This article is dedicated to the study of singular codimension $1$ foliations $\F$ on a simplicial complete toric variety $X$ and their pullbacks by dominant rational maps $\varphi:\P^n\dashrightarrow X$. First, we describe the singularities of $\F$ and $\varphi^*\F$ for a generic pair $(\varphi,\F)$. Then we show that the first order deformations of $\varphi^*\F$ arising from first order unfoldings are the families of the form $\varphi_\varepsilon^*\F$, where $\varphi_\varepsilon$ is a perturbation of $\varphi$. We also prove that the deformations of the form $\varphi^*\F_\varepsilon$ consist exactly of the families which are tangent to the fibers of $\varphi$. In order to do so, we state some results of independent interest regarding the Kupka singularities of these foliations.
\end{abstract}

\tableofcontents

\section{Introduction}

\

The study of first order perturbations of codimension one foliations has shown to be an effective tool for finding new irreducible components of the space of codimension one foliations in $\PP^n$, see the works of \cite{CPV,fj,CGM}. Also the study of foliations in normal varieties has been gaining growing interest in the area, see for instance \cite{AD,AD2}. In this article we study first order perturbations, \emph{i.e.}, first order unfoldings and first order deformations of foliations on $\PP^n$ which are pullbacks of foliations on a normal toric  variety. Let us now introduce some notations and definitions to make a clearer exposition.

\

Let $X$ be a normal compact complex variety and $j:X_r\to X$ the inclusion of its smooth locus.
A singular codimension $1$ foliation $\F$ on $X$ can be defined as a foliation on $X_r$ and is induced by a unique element $[\omega]\in \PP H^0(X,j_*(\Omega^1_{X_r} \otimes \O_{X}(\D)\vert_{X_r}))$ for some element $\D\in Cl(X)$ satisfying the Frobenius integrability condition $\omega\wedge d\omega=0$ and whose zero locus $Sing(\omega)$ is of codimension greater than one. One can then define the moduli space of foliations of codimension one and degree $\D$ as the quasi-projective variety
\[
\F^1(X,\D)=\{[\omega]\in \P H^0(X,j_*(\Omega^1_{X_r} \otimes \O_{X}(\D)\vert_{X_r})): \omega \wedge d \omega =0  \mbox{ and } \codim(Sing(\omega))\geq 2\}.
\]
We will use $[\omega]$ or $\F_\omega$ indistinctly when no confusion arises. 
Let $\F_\omega$ be the foliation corresponding to an element $[\omega]\in \F^1(X,\D)$ and $X[\varepsilon]$ be the scheme $X\times \NN $ where $\NN=Spec(\C[\varepsilon]/(\varepsilon^2))$ is the scheme of \emph{dual numbers}. Let us also denote by $j[\varepsilon]:X_r[\varepsilon]\to X[\varepsilon]$ the natural inclusion and $\pi_1: X[\varepsilon]\to X$ the projection.  A \emph{first order deformation} of $\F_\omega$ is a family of foliations on $X$ parameterized by $\NN$. This is, an integrable element $\omega_\varepsilon\in H^0(X[\varepsilon], j[\varepsilon]_*(\Omega^1_{X_r[\varepsilon]|\NN}\otimes \L_\varepsilon))$ (unique up to multiplication by units in $\CC[\varepsilon]/(\varepsilon^2)$) such that $\L_\varepsilon$ is a family of line bundles on $X_r$ and $\omega_\varepsilon$ restricts to $[\omega]$ on the central fiber. We say that $\omega_\varepsilon$ is a deformation of degree $\D$ if $\L_\varepsilon=\pi_1^*(\O_X(\D)\vert_{X_r})$. Of course, the space of degree $\D$ first order deformation of $\F_\omega$ is canonically isomorphic to the tangent space $\T_{[\omega]} \F^1(X,\D)$ to the space of foliations at $[\omega]$. These elements can be expressed as $\omega_\varepsilon=\omega + \varepsilon \eta$ for some $\eta\in  H^0(X,j_*(\Omega^1_{X_r} \otimes \O_{X}(\D)\vert_{X_r})$ such that $\omega_\varepsilon\wedge d\omega_\varepsilon=0$.
 We refer the reader to \cref{section6} for more details on this definition.

A \emph{first order unfolding} of $\F_\omega$ on the other hand corresponds to a twisted differential form $\widetilde{\omega}_\varepsilon\in  H^0(X[\varepsilon], j[\varepsilon]_*(\Omega^1_{X_r[\varepsilon]}\otimes \L_\varepsilon))$ restricting to $[\omega]$ on the central fiber and satisfying  $\widetilde{\omega}_\varepsilon\wedge d\widetilde{\omega}_\varepsilon=0$, where $\L_\varepsilon$ is a family of line bundles on $X_r$. Over a normal base $S$ this would correspond to a foliation on $X\times S$ extending $\F_\omega$. Just as in the previous paragraph, every first order unfolding of degree $\D$ admits a description of the form $\widetilde{\omega}_\varepsilon = \omega+\varepsilon\eta+hd\varepsilon$ where $\eta\in H^0(X,j_*(\Omega^1_{X_r} \otimes \O_{X}(\D)\vert_{X_r}))$ and $h\in H^0(X,\O_X(\D))$. Observe that every degree $\D$ first order unfolding induces a degree $\D$ first order deformation via the natural projection. Along the literature one can find many works where unfoldings are used in order to get a better understanding of the singularities and the deformations of a given foliation, see for instance \cite{suwa-meromorphic,suwa-multiform,MMQ,moli}.

A toric variety $X$ is an algebraic variety with a Zariski open $T\simeq (\C^*)^q$ such that the natural product on $T$ extends to an action on $X$. Along this work, every toric variety will be  assumed to be a normal orbifold (simplicial). The geometry of these spaces can be encoded in a combinatorial object, namely its \emph{fan}. Moreover, one can use this connection between toric and simplicial geometry in order to construct a geometric quotient $(\C^m\setminus Z)/G\simeq X$ for some reductive group $G$. The morphism inducing this isomorphism will be denoted $\pi_X:\C^m\setminus Z\to X$. The homogeneous coordinate ring $S_X$ of $\C^m\setminus Z$ is therefore equipped with a $Cl(X)=Hom(G,\C^*)$-grading and can be used to describe many geometrical objects on $X$.

Let $\F_\omega$ be a foliation on $X$ and $\varphi: Y\dashrightarrow X$ be a generic rational map from a smooth variety $Y$ to $X$. The pullback $\varphi^*\F_\omega$ of $\F_\omega$ under $\varphi$ is defined as the foliation defined by the differential form $\varphi^*\omega\in H^0(Y,\Omega^1_Y\otimes\O_Y(\varphi^*\D))$. These type of foliations have played a central role in the theory of singular foliations, just as in \cite{CLE,PC}.

\

The main topic of this article is the study of foliations on the projective space $\P^n$ of the form $\varphi^*\F$, where $X$ is a toric variety and $\varphi: \P^n\dashrightarrow X$ is a dominant rational map. Pullback foliations admit two canonical types of deformations, namely the ones of the form $\varphi^*\F_\varepsilon$ or $\varphi_\varepsilon^* \F$ where $\F_\varepsilon$ and $\varphi_\varepsilon$ are first order deformations of $\F$ and $\varphi$ respectively. We characterize these deformations in terms of the first order unfoldings of $\varphi^*\F$ and the fibers of $\varphi$ under some generic conditions on the pair $(\varphi,\alpha)$ which are properly stated in \cref{generic}.
In order to do so, we prove some results of independent interest: we give a proper description of both the set of rational maps $\P^n\dashrightarrow X$ and the singularities of a generic foliation $\F$ on $X$ and its pullback $\varphi^*\F$.

\

The content of this work is organized as follows:

In Section 2 we discuss the notions on toric varieties and foliations that will be necessary in the rest of the article. Regarding rational maps $\phi: \P^n\dashrightarrow X$, we show that every such application satisfying $\codim(\varphi^{-1}(Sing(X)))\geq 2$ admits a polynomial lifting in homogeneous coordinates making the follow diagram commute
\[
\xymatrix{
	\CC^{n+1}-\{0\} \ar[d]^{\pi_{\P^n}} \ar@{-->}[r]^F & \CC^m-Z\ar[d]^{\pi_X}\\
	\P^n \ar@{-->}[r]^{\varphi} &X}
\]
and having the \emph{right} base locus, \emph{i.e.}, such that the maximal Zariski open where $\varphi$ is defined is exactly $ \P^n\setminus \pi_{\P^n}\left(F^{-1}(Z)\right)$.

Section 3 is advocated to the study of the singularities of a generic foliation on $X$ and its pullback under a generic rational map. A Kupka singularity of $\F_\omega$ is a point $p\in X$ such that $\omega(p)=0$ and $d\omega(p)\neq 0$. Around these points the structure of $\F_\omega$ is well understood. First, we analyze the singularities of a foliation on a toric surface and determine which are the elements $\D\in Cl(X)$ such that a generic $[\alpha]\in \F^1(X,\D)$ has only reduced Kupka singularities. Then we describe the singularities (both Kupka and non-Kupka) of a generic pullback $\varphi^*\alpha$ in terms of $\varphi^{-1}(Sing(\alpha))$ and different loci associated to the map $\varphi$. For  more
details see  \cref{singcomponents},  \cref{kupkascheme} and  \cref{conclusionaffinegeneric}.

In Section 4 we discuss first order deformations and unfoldings and prove the facts that were mentioned above.
For a rational map $\varphi:\PP^n\dashrightarrow X$ we will denote by $X_r^*$ the intersection of the smooth locus of $X$ and the non-critical values of $\varphi$. The results regarding the canonical types of deformations are the following, see \cref{teo3} and \cref{pullback}:

\begin{theorema}
Let $Y$ be a smooth projective variety with discrete Picard group,  $X$ be a normal projective variety of dimension $q\geq 2$, $\D\in Cl(X)$ and $\varphi: Y \dashrightarrow X$  a dominant rational map with connected fibers such that $\codim (X_r\setminus X_r^*)\geq 2$. Let $\alpha\in \F^1(X,\D)$,  $\omega = \varphi^*(\alpha)$   and let $\omega_\varepsilon$ be a first order deformation of $\omega$. The following conditions are equivalent:
\begin{enumerate}[1)]
	\item The fibers of the map $\overline{\varphi}= \varphi\otimes 1:Y[\varepsilon]\dashrightarrow X[\varepsilon]$ are tangent to $\ker(\omega_\varepsilon)$, \emph{i.e.}, at a generic point $p$ the relative tangent sheaf satisfies
$$\T_{\overline{\varphi}} Y[\varepsilon]_p\subseteq \ker(\omega_\varepsilon)_p\subseteq \T_\NN Y[\varepsilon]_p.$$
	\item There exists a degree $\D$ first order deformation $\alpha_\varepsilon$ of $\alpha$ such that $\omega_\varepsilon=\overline{\varphi}^*(\alpha_\varepsilon)$.
\end{enumerate}
\end{theorema}

\begin{theoremb}
Let $X$ be a normal complete toric surface, $\D\in Cl(X)$, $[\alpha]\in \F^1(X,\D)$, $\varphi:\P^n\dashrightarrow X$ be a rational map, such that $\alpha$ and $\varphi$ are \emph{generic}. Let $\omega_\varepsilon$  be a first order deformation of $\omega:=\varphi^*\alpha$. Then $\omega_\varepsilon$ is induced by a first order unfolding if and only if there is a map $\varphi_\varepsilon:\PP^n[\varepsilon]\dashrightarrow X$ such that
\[
 \omega_\varepsilon=\varphi_\varepsilon^\ast(\alpha).
\]
\end{theoremb}

\

\begin{acknowledgement}
We would like to thank Mariano Chehebar, Fernando Cukierman, Alicia Dickenstein, Alcides Lins Neto and Jorge Vitorio Pereira for fruitful conversations at various stages of this work.  We are especially grateful to Federico Quallbrunn for his further valuable help. Gratitude is also due to Jaroslaw Buczy\'nski for his suggestions and contributions regarding \cref{firstsection}. Finally, we want to thank the anonymous referee for his/her constructive comments on the original manuscript.
\end{acknowledgement}

\section{Toric varieties, foliations and rational maps} \label{firstsection}

In this section we give definitions and basic properties of our main objects of study, namely \emph{toric varieties}, \emph{rational maps $\PP^n\dashrightarrow X$} and \emph{foliations}.

\subsection{Toric varieties and homogeneous coordinates}

\

In the following we will review some facts regarding toric varieties that will be used in the rest of the article. For a complete treatment of this topic and complete proofs we refer the reader to \cite{CLS}. We will follow the notation in \emph{loc. cit.} as well.

\

Let $X=X_\Sigma$ be the toric variety of dimension $q$ associated to the fan $\Sigma$ in $N\simeq \ZZ^q$. We will denote by $\Sigma(1)=\{ \sigma_1,\dots,\sigma_m\} $ the set of one-dimensional cones in $\Sigma$. For each $\sigma_i\in \Sigma(1)$ there is a unique element $v_i\in N$ generating $\sigma_i$.  Unless stated otherwise, we will assume that $X$ is complete and simplicial. In terms of its cones, this is equivalent to requiring $\Sigma$ to cover $\RR^q=N\otimes_\ZZ \RR$ and that the set of rays of every cone in $\Sigma$ can be extended to a basis of $\RR^q$. We will now recall the main insights of the construction of homogeneous coordinates for $X$. 

The action of the open torus $T=N\otimes_\ZZ \CC^*$ on $X$ induces an action on the group $Div(X)$ of divisors of $X$. The subgroup $Div_T(X)$ of elements which are fixed by this action is freely generated by some Weil divisors $D_1,\dots, D_m$ which are in correspondence with the elements in $\Sigma(1)$. Let $M=Hom_\ZZ (T,\CC^*)\subseteq K(X)$ be the character lattice of $T$. The morphism $M\to Div_T(X)$ sending $m\mapsto div(m)$ together with the restriction of the quotient map into the class group $Div_T(X)\to Cl(X)$ fit together in an exact sequence
\begin{equation} \label{suc} 0\rightarrow M \rightarrow Div_T(X)\rightarrow Cl(X) \rightarrow 0.
\end{equation}
Applying the functor $Hom_\ZZ(-,\CC^*)$, we get a presentation of $T$ as a quotient 
$$(\CC^*)^m/G\simeq T,$$
where $G=Hom_\ZZ(Cl(X),\CC^*)$. 
\begin{remark} Let us now fix an isomorphism $Cl(X)\simeq \ZZ^{m-q}\times H$ for a finite group $H$. By duality, this induces an isomorphism $G\simeq (\CC^*)^{m-q}\times Hom_\ZZ(H,\CC^*)$.
\end{remark}
If $v_k=(v_1^k,\dots,v_q^k)$ are the coordinates of the $k$-th ray of $\Sigma$, then the inclusion morphism $G\hookrightarrow (\CC^*)^m$ is given by the equations 
\[
G=\{ (g_1,\dots,g_m)\in (\CC^*)^m \vert \prod_{k=1}^m g_k^{v_i^k}=1 \hspace{0.2cm} \forall 1\leq i \leq q\}\ ,
\]
see for instance  \cite[Lemma 5.1.1, p.~206]{CLS}. It is worth mentioning that this implies that an element $t=(t^{a_1},\dots,t^{a_m})\in (\CC^*)^m$ lies in $G$ if and only if $a_1v_1+\cdots+a_mv_m=0$. The construction of homogeneous coordinates consists of a geometric quotient
\[
\pi_X : \left(\CC^{m}\setminus Z\right)/G\xrightarrow{\sim} X,
\]
where $Z$ is a union of subspaces of codimension equal or greater than 2 which extends the previous presentation. 
The action of $G$ in $S:=\CC[\CC^m\setminus Z]=\CC[z_1,\dots,z_m]$ induces a $Cl(X)=Hom_\ZZ(G,\CC^*)$-grading
\[ S=\bigoplus_{\D\in Cl(X)}S_\D .\]
This graded ring is called the \emph{homogeneous coordinate ring of } $X$. Just as in the projective case, this grading satisfies
\[ H^0(X,\O_X(\D))=S_\D. \]
for every $\D\in Cl(X)$. The coordinate functions are of course homogeneous and satisfy $\deg(z_i)=[D_i]$.
The \emph{irrelevant ideal} of $X$ is the ideal 
\[
I_{Z} = \langle \widehat{z}_{\sigma} = \prod_{i\notin \sigma(1)}z_i \rangle_{\sigma \in \Sigma}.
\]
\begin{remark} \label{Iij} If $X$ is a surface, then either $m=3$ and therefore $Z=\{0\}$ ($X$ is a weighted projective plane) or $Z$ is equidimensional of codimension $2$. In the latter case, if we index the rays in a clock-wise manner we have
\[ Z= \bigcup_{\substack{i,j=1\\i,j \text{ non-consecutive} }}^m Z_{ij} , \]
where  $Z_{ij} = \{z_i=z_j=0\} $. For simplicity, we will use the notation $I_{ij}=I(Z_{ij})$ in  \cref{singularities} and \cref{section6}.
\end{remark}

\begin{remark}  Just as in the projective case, every homogeneous ideal $I\subseteq S$ defines a closed subscheme $\V(I)\subseteq X$.  Moreover, every closed subscheme arises this way (see \cite[Proposition 5.2.7, p.~222]{CLS}]).
\end{remark}

Let $j:X_r\hookrightarrow X$ be the inclusion of the smooth locus of $X$. Recall that the sheaf of Zariski differential $p$-forms is  
\[ \widehat{\Omega}^p_X= (\Omega^p_X)^{\vee\vee}= j_*(\Omega^p_{X_r}).   \] 
The action of $G$ induces $Cl(X)$-grading in the algebra of polynomial differential forms $\Omega^\bullet_S$ (resp. the algebra of polynomial vector fields $\bigwedge^\bullet Der(S,S)$) such that the exterior derivative has degree zero, \emph{i.e.},   $\deg(dz_i)=[D_i]$ (resp. $\deg(\frac{\partial}{\partial z_i})=-[D_i]$).

We will make use of the \emph{generalized Euler sequence}
\begin{equation} \label{1}
0 \longrightarrow \widehat{\Omega}_{X}^{1} \longrightarrow \bigoplus_{i=1}^{m} \O_X(-D_i) \longrightarrow Cl(X)\otimes_\ZZ\O_X  \longrightarrow 0 \ 
\end{equation}
and its dual 
\begin{equation}\label{2}
0 \longrightarrow \O_X^{\oplus m-q} \longrightarrow \bigoplus_{i=1}^m\O_X(D_i) \longrightarrow \T X \longrightarrow 0 ,\
\end{equation}
where $\T X$ is the Zariski tangent sheaf of $X$ (see \cite{CB}). The first arrow in the second sequence corresponds to the \emph{radial vector fields}. These homogeneous polynomial vector fields (of degree zero) generate the tangent space to every $G$-orbit in $\CC^m$ and are of the form
\[ R_j=\sum_{i=1}^{m} a^j_iz_i\frac{\partial}{\partial z_i}   \hspace{0.5cm} 1\le j\le m-q, \]
where the coefficients $a^j_i\in \ZZ$ satisfy $\deg(z_i)=(a_i^1,\dots,a_i^{m-q},h_i) \in Cl(X)$.

The smooth locus $X_r\hookrightarrow X$ corresponds to the subfan $\Sigma_r$ consisting of the smooth cones in $\Sigma$. In particular, we have $\Sigma(1)=\Sigma_r(1)$ and therefore by \eqref{suc} the map
\begin{align}
Cl(X) &\rightarrow Cl(X_r) = Pic(X_r) \\
\notag [\D] &\mapsto [\D|_{X_r}]
\end{align}
is an isomorphism. The construction of homogeneous coordinates for $X_r$ coincides with the (co)restriction of $\pi_X$, \emph{i.e.},
\[X_r\simeq (\CC^m\setminus  Z')/G ,\]
where $Z'=Z\cup \pi_X^{-1}(X\setminus X_r)$. At the level of coordinates rings, 
$$j^*: S_{X_r}=\CC[z_1,\dots,z_m]\to S_X=\CC[z_1,\dots,z_m]$$ 
is the identity. 

\begin{remark}\label{formashomogeneas}
Let $\D$ be a Weil divisor on $X$ and $\L= \O_X(\D)|_{X_r}$ be the line bundle given by the restriction of the reflexive sheaf $\O_X(\D)$ to $X_r$.
If we tensorize the Euler sequence and take cohomology we can conclude that every $\omega \in H^0(X_r, \Omega^1_{X_r}\otimes \L)$ can be described in homogeneous coordinates as a polynomial differential form
 \begin{equation}\label{omega2}
\omega = \sum_{i=1}^m A_i(z)dz_i
\end{equation}
where $A_i\in S_X$ is homogeneous of degree $\L-[D_i]\in Pic(X_r)$ and $\imath_{R_j}(\sum_{i=1}^m A_i(z)dz_i)=0$ for every $1\le j\le m-q$.

 Now, being $\Omega^1_{X_r}\otimes \L$ locally free we have 
\[
 j_*(\Omega^1_{X_r}\otimes \L) \simeq (\Omega^1_X \otimes \O_X(\D))^{\vee \vee},
 \]
see \cite[Proposition 8.0.1, p.~347]{CLS} or \cite[Cor. 1.7, p.~127]{hartstable} as well. This implies
 \[
 H^0(X_r, \Omega^1_{X_r}\otimes \L) \simeq H^0(X, (\Omega^1_{X}\otimes \O_X(\D))^{\vee \vee})\ .
 \]
As a consequence, every $\omega \in H^0(X, (\Omega^1_{X}\otimes \O_X(\D))^{\vee \vee})$ admits a description  like \eqref{omega2} where $\deg(A_i) = [\D-\D_i]\in Cl(X)$ and $\imath_{R_j}(\sum_{i=1}^m A_i(z)dz_i)=0$ for every $1\le j\le m-q$.
\end{remark}

\begin{definition}
We will use the notation
\[
\widehat{\Omega}^{1}_S=\bigoplus_{\D\in Cl(X)} \widehat{\Omega}^{1}_S(\D) =\bigoplus_{\D\in Cl(X)} H^0(X,(\Omega^1_X\otimes \O_X(\D))^{\vee\vee}) \subseteq \Omega^1_S
\]
for the $Cl(X)$-graded $S$-module whose associated sheaf is $\widehat{\Omega}_X^{1}$, see \cite[Corollary 8.1.5, p.~362]{CLS}.
\end{definition}

\begin{remark}\label{camposhomogeneos}
 For vector fields on the other hand, we can repeat the process of the previous remark on the sequence \eqref{2} applied to $X_r$ in order to get
 \begin{equation}\label{pD}\xymatrix @C=20pt @R=10pt{
0 \ar[r] & H^0(X_r,\L^{\oplus m-q}) \ar[r] & H^0(X_r,\bigoplus_{i=1}^m(\L+\L_i)) \ar[r]^-{\rho}  & \\
\ar[r]^-{\rho} & H^0(X_r,\T X_r \otimes \L)  \ar[r] & H^1(X_r,\L^{\oplus m-q}) \ar[r] & \cdots \ .}
\end{equation}
By the arguments above the elements in the image of $\rho$ are exactly the sections
\[\X \in H^0(X_r, \T X_r \otimes \L) \simeq H^0(X, (\T X \otimes \O_X(\D))^{\vee\vee})\]
that admit a homogeneous lifting  to $\CC^m$, \emph{i.e.} a description of the form
\[\X = \sum_{j=1}^m B_j(z) \frac{\partial}{\partial z_j} \]
where $B_j\in \CC[z_1,\dots z_m]$ is homogeneous of degree $[\D+\D_j]\in Cl(X)$. This description is unique up
to linear combinations of the radial vector fields.
\end{remark}

\begin{remark}\label{canonical} The canonical sheaf of a toric variety $X$ is given by $\omega_X \simeq \O_X \left(- \sum_{i=1}^m \D_i\right)$, see  \cite[Theorem 8.2.3, p.~366]{CLS}. This isomorphism can be explicited via the section
\[ \Omega_X:=\ii_{R_1}\dots\ii_{R_{m-q}} dz_1\wedge\dots\wedge dz_m \in H^0(X,(\Omega^q_X\otimes \O_X(\sum_{i=1}^m \D_i))^{\vee\vee}).\]
\end{remark}

\subsection{Rational maps $\PP^n\dashrightarrow X$}
Since we are interested in studying pullbacks of foliations under dominant rational maps $\PP^n\dashrightarrow X$, we will now present a description of these maps that will make our calculations more feasible. We will denote by 
$$\pi:\CC^{n+1}\setminus \{ 0 \} \to \PP^n$$ the standard quotient.

\begin{lemma}\label{homogeneousrationalmaps}
Let $e_1v_1+\dots+e_mv_m=0$ be an equation with integer coefficients for the rays $\{v_i\}_{i=1}^m$ in the fan of $X$. Then, every $F=(F_1,\dots,F_m)\in \CC(x_0,\dots,x_n)^m$, with $F_i$ homogeneous of degree $ e_i$, induces a rational map $\tilde{F}:\P^n\dashrightarrow X$ such that the diagram
\[
\xymatrix{
	\CC^{n+1}-\{0\} \ar[d]^\pi \ar@{-->}[r]^F & \CC^m-Z\ar[d]^{\pi_X}\\
	\P^n \ar@{-->}[r]^{\tilde{F}} &X}
\]
is commutative.
\end{lemma}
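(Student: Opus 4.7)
The strategy is to exhibit $\pi_X \circ F$ as a $\CC^*$-invariant rational map from $\CC^{n+1}\setminus\{0\}$ to $X$, so that it descends through $\pi$ to yield the desired map $\tilde{F}:\P^n\dashrightarrow X$. The whole content of the lemma will reduce to recognizing that the hypothesized relation $\sum_{i=1}^m e_i v_i = 0$ is exactly what forces the scaling action of $\CC^*$ on the source to be realized, via $F$, by a one-parameter subgroup of $G \subset (\CC^*)^m$.

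Concretely, using the homogeneity of each $F_i$, for any $\lambda \in \CC^*$ and any $x$ in the domain of $F$, I would compute
\[
F(\lambda x) = \bigl(\lambda^{e_1} F_1(x),\dots,\lambda^{e_m} F_m(x)\bigr) = g_\lambda \cdot F(x),
\]
where $g_\lambda := (\lambda^{e_1},\dots,\lambda^{e_m}) \in (\CC^*)^m$ is understood to act componentwise on $\CC^m$, as in Lemma \ref{groupG}. To check that $g_\lambda \in G$, I would invoke the description in Lemma \ref{groupG}(1) and verify
\[
\prod_{i=1}^m (\lambda^{e_i})^{(v_i)_j} = \lambda^{\sum_{i=1}^m e_i (v_i)_j} = 1 \quad \text{for every } j=1,\dots,q,
\]
which is precisely the $j$-th coordinate of the integral relation $\sum_{i=1}^m e_i v_i = 0$.

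Since $g_\lambda \in G$ and $\pi_X$ is constant on $G$-orbits, it follows that $\pi_X(F(\lambda x)) = \pi_X(F(x))$ wherever both sides are defined; hence $\pi_X \circ F$ factors through the quotient $\pi: \CC^{n+1}\setminus\{0\} \to \P^n$, producing a well-defined rational map $\tilde{F}:\P^n\dashrightarrow X$ that makes the square commute. I do not anticipate any real obstacle: the only technical caveat is that for $\tilde{F}$ to enjoy a dense domain of definition one needs $F^{-1}(\CC^m\setminus Z)\neq\emptyset$, but this is implicit in the statement that $F$ induces a rational map to $X$ rather than a map entirely into the excluded locus.
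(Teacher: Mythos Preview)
Your proof is correct and follows essentially the same approach as the paper: both compute $F(\lambda x)=(\lambda^{e_1},\dots,\lambda^{e_m})\cdot F(x)$ and invoke Lemma \ref{groupG} to see that this tuple lies in $G$, so that $\pi_X\circ F$ is $\CC^*$-invariant and descends to $\P^n$. Your version is in fact slightly more explicit, spelling out the verification via part (1) of Lemma \ref{groupG}, whereas the paper simply asserts that the relation among the $e_i$ guarantees membership in $G$.
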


\begin{proof} For every $t\in\CC^*$ and every $x\in\CC^{n+1}$ we have 
\begin{align*} 
F(t\cdot x)&=\left(t^{e_1}F_1(x),\dots,t^{e_m}F_m(x)\right)\\
&=(t^{e_1},\dots,t^{e_m})\cdot F(x).
\end{align*} 
The hypothesis on the degrees assures that the point $g(t)=(t^{e_1},\dots,t^{e_m})\in (\CC^*)^m$ satisfies the equations 
\[ \prod_{k=1}^m g(t)_k^{v_i^k}=1 \hspace{0.2cm} \forall 1\leq i \leq q \]
and therefore is an element of $G$. Therefore, the map $F:\CC^{n+1}\setminus 0 \dashrightarrow \CC^m\setminus Z$ descends to a rational map $\tilde{F}:\P^n\dashrightarrow X$ with base locus $\pi(F^{-1}(Z))$.
\end{proof}

Under suitable assumptions every rational map admits a lifting in homogeneous coordinates $F:\CC^{n+1}\setminus \{ 0\} \to \CC^m\setminus Z$.

\begin{lemma}\label{morf1}
Assume  $\Sigma$ has a smooth cone of maximal dimension.
Then for every rational map $\varphi:\P^n\dashrightarrow X$ and every relation $e_1v_1+\dots+e_mv_m=0$ there exist $F=(F_1,\dots,F_m)$ such that $\deg(F_i)= \lambda e_i$, for some $\lambda\in\N$, and $\varphi=\tilde{F}$.
\end{lemma}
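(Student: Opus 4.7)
My plan is to construct the lift $F$ in two steps: first build a canonical lift $F^{(0)}$ of $\phi$ by pulling back the Cox coordinates of $X$, then adjust it by multiplication by a power of a linear form so that the resulting polynomials have degrees exactly $\lambda e_i$.

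For the canonical lift, fix a smooth cone $\sigma$ of maximal dimension; after relabeling I may assume its generators are $v_1,\ldots,v_q$, which then form a $\ZZ$-basis of $N$. Let $u_1,\ldots,u_q\in M$ be the dual basis, so that the characters $\chi^{u_j}$ provide a coordinate system on $U_\sigma\simeq\CC^q$, with explicit Cox-coordinate expression $\chi^{u_j}=\prod_i z_i^{\langle u_j,v_i\rangle}$, all exponents being integers by smoothness of $\sigma$. Because $z_i\in H^0(X,\O_X(\D_i))$ and $\P^n$ is factorial, the pullback $F_i^{(0)}:=\phi^\ast(z_i)$ is a well-defined homogeneous polynomial in $\CC[x_0,\ldots,x_n]$ of degree $d_i^{(0)}:=\deg\phi^\ast(\D_i)$. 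I would then verify that $F^{(0)}=(F_1^{(0)},\ldots,F_m^{(0)})$ induces the map $\phi$: on $\phi^{-1}(U_\sigma)$ one has $(\widetilde{F^{(0)}})^\ast\chi^{u_j}=\prod_i(F_i^{(0)})^{\langle u_j,v_i\rangle}=\phi^\ast\chi^{u_j}$, so the two rational maps agree on a dense open set and hence everywhere.

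I would next observe that $(d_i^{(0)})$ is itself a relation: evaluating any $u_j\in M$ against it gives $\sum_i\langle u_j,v_i\rangle\,d_i^{(0)}=\deg\phi^\ast\chi^{u_j}=0$, since $\chi^{u_j}$ is a rational function on $X$ of total degree zero. Hence $(d_i^{(0)})$ is orthogonal to $M$, i.e.\ $\sum_i d_i^{(0)}v_i=0$. To match the prescribed relation, I would pick $\lambda\in\NN$ large enough that $\lambda e_i\ge d_i^{(0)}$ for every $i$, fix a linear form $h\in\CC[x_0,\ldots,x_n]_1$, and set $F_i:=h^{\lambda e_i-d_i^{(0)}}\cdot F_i^{(0)}$, so that $\deg F_i=\lambda e_i$ as required. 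Since $(\lambda e_i-d_i^{(0)})$ is the difference of two relations, it is itself a relation, and by Lemma~\ref{groupG}(2) the tuple $(h^{\lambda e_i-d_i^{(0)}})_i$ defines a morphism into $G$. Consequently $F$ and $F^{(0)}$ represent the same rational map to $X$, so $\tilde F=\phi$.

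The main obstacle will be the rigorous verification that the canonical lift $F^{(0)}$ realizes $\phi$ in homogeneous coordinates; this is precisely where the smooth maximal cone hypothesis is indispensable, since without it the pairings $\langle u_j,v_i\rangle$ are merely rational and no tautological polynomial lift $z_i\mapsto\phi^\ast z_i$ can be expected. The positivity requirement $\lambda e_i\ge d_i^{(0)}$ in the adjustment step is harmless in the situations of interest in the paper, where $(e_i)$ has strictly positive entries (for instance in fake weighted projective spaces).
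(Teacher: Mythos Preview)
Your approach is correct and reaches the same conclusion, but it differs from the paper's argument in how the initial lift is produced. The paper works entirely on the affine chart $U_\sigma\simeq\CC^q$ attached to the smooth maximal cone: it writes $\phi$ there as a tuple of rational functions $(h_1/g_1,\dots,h_q/g_q,1,\dots,1)$ and then simply multiplies by the $G$-valued function $(g^{e_1},\dots,g^{e_m})$, where $g=\prod g_i$, obtaining directly a lift with $\deg F_i=\deg(g)\,e_i$. Your route instead constructs a canonical lift $F^{(0)}_i=\phi^\ast z_i$ by pulling back the torus-invariant divisors, verifies it represents $\phi$ via the characters $\chi^{u_j}$, and only then rescales. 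The paper's argument is shorter and sidesteps the subtleties you flag about pulling back sections of reflexive rank-one sheaves that may fail to be locally free; your argument is more intrinsic and has the pleasant feature that $F^{(0)}$ is already essentially the \emph{complete} lift treated in the two propositions that follow, so your construction anticipates that discussion. Two minor remarks: first, the verification that $(d_i^{(0)})$ is a relation should logically precede the appeal to Lemma~\ref{homogeneousrationalmaps}, since one needs it to know $F^{(0)}$ descends at all; second, your positivity worry about $\lambda e_i\ge d_i^{(0)}$ is unnecessary here, because Lemma~\ref{homogeneousrationalmaps} explicitly allows $F_i\in\CC(x_0,\dots,x_n)$, so negative powers of $h$ are permitted and any $\lambda\in\NN$ works.
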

\begin{proof} Let $\sigma\in\Sigma$ be a smooth cone with $\dim(\sigma)=q$. Without loss of generality, we can assume that $\sigma(1)=\{v_1, \dots,v_q \}$. Recall we have a smooth  open set $U_\sigma\subseteq X$ with $U_\sigma\simeq \CC^q$. Moreover, by \cite[Proposition 5.2.10, p.~223]{CLS} the restriction of $\pi_X$ to the variety $V=\{x_{q+1}=\cdots=x_m=1\}$ induces an isomorphism with $U_\sigma$. Via this isomorphism, we consider $\varphi: \P^n\dashrightarrow V$.
This map can be described as 
\[
\varphi=\left( \frac{h_1}{g_1},\dots,\frac{h_q}{g_q},1,\dots,1\right),
\]
where $h_i$ and $g_i$ are homogeneous polynomials of the same degree. If we write $g=\prod g_i$, then for a generic point $x\in\CC^{n+1}$ we get $(g^{e_1},\dots,g^{e_m})(x)\in G$. Finally
\begin{align*} \varphi=\left(g^{e_1},\dots,g^{e_m}\right)\cdot \left( \frac{h_1}{g_1},\dots,\frac{h_q}{g_q},1,\dots,1\right)  =\left( g^{e_1}\frac{h_1}{g_1},\dots,g^{e_q}\frac{h_q}{g_q},g^{e_{q+1}},\dots,g^{e_m}\right).  
\end{align*}
Clearly, the lifting $F=(g^{e_1}\frac{h_1}{g_1},\dots,g^{e_q}\frac{h_q}{g_q},g^{e_{q+1}},\dots,g^{e_m})$ satisfies $\deg(F_i)=\deg(g)\,e_i$ and $\varphi=\tilde{F}$.
\end{proof}

\begin{remark} As in the case of projective spaces, the lifting $F$ fails to be unique. Moreover, different liftings give rise to different base loci.
\end{remark}
The problem of deciding whether a rational map between toric varieties admits a \emph{complete} lifting to their respective homogeneous coordinate rings was addressed in \cite{BB}. Here the word `complete' indicates that the lifting has the right base locus. That is, 
\[
Reg(\varphi) = \P^n\setminus \pi\left(F^{-1}(Z)\right),
\]
where $Reg(\varphi)$ is the maximal Zariski open subset on which $\varphi$ is well defined as a regular map. In order to guarantee the existence of such liftings it is necessary to introduce multi-valued maps. Each coordinate of such map is a multi-valued section according to the following.

\begin{definition} A \emph{multi-valued section on $X$} is an element $\gamma$ in the algebraic closure of $\CC(z_1,\dots,z_m)$. We say $\gamma$ is \emph{homogeneous} if $\gamma^r=f\in \CC(z_1,\dots,z_m)$ for some integer $r\geq 1$. In this case, we say that $\gamma$ is \emph{regular} at $p\in \CC^m$ if $f$ is regular at $p$.
\end{definition}

For our purposes it will be highly convenient to restrict ourselves to the set of rational maps admitting a complete polynomial lifting. The following proposition shows that this is the case for a general map $\PP^n\dashrightarrow X$.

\begin{proposition} \label{completelifting} Let $X$ be a complete toric variety and $\varphi:\P^n\dashrightarrow X$ be a dominant rational map satisfying $\codim(\varphi^{-1}(Sing(X)))\geq 2$.
Then $\varphi$ admits a complete polynomial lifting.
\end{proposition}
\begin{proof} 
By \cite[Theorem 4.19, p.~255]{BB} we know that a multi-valued complete lifting $F=(F_1,\dots,F_m)$ exists. We will now prove that if $\codim(\varphi^{-1}(Sing(X)))\geq 2$ then $F$ is actually polynomial. Since $X$ is complete and $\codim(\varphi^{-1}(Sing(X)))\geq 2$ it follows that both $\varphi$ and $\varphi_r:=\varphi|_{\varphi^{-1}(X_r)}$ are regular in codimension 1. Using \cite[Proposition 5.1, p.~259]{BB} we can write
$$ F_i=G_i \gamma_i,$$
where $G_i$ is a rational function and $\gamma_i$ is a multi-valued homogeneous section on $\P^n$, which is also invertible on $Reg(\varphi_r)$. Our hypothesis on $\varphi^{-1}(Sing(X))$ implies that $\gamma_i$ has no zeros or poles in $\P^n$ and therefore $\gamma_i\in\CC$.
In case $G_i$ had poles, $\varphi$ would not be regular in codimension 1. Then it follows that $F_i$ is in fact a polynomial.
\end{proof}

\begin{remark} The hypothesis on $\varphi^{-1}(Sing(X))$ is necessary.
For instance, the map $\P^2\dashrightarrow \P(1,1,2)$ defined  by $F=(z_0^2, z_0 z_1, z_0 z_2^3)$ contracts the divisor $\{z_0=0\}$ into the singular point $[0:0:1]$ and does not admit a complete polynomial lifting.
\end{remark}

\begin{remark} If $F$ is a complete polynomial lifting and $g\in G$, then $g\cdot F$ is again a complete polynomial lifting. In fact, if $\varphi$ is regular in codimension 1 then the complete polynomial lifting is unique up to multiplication by elements of $G$. Its corresponding degree $\bar{e}$ can be computed by looking at the line bundles $\varphi^*(\O_X(D_i))$. Indeed, each coordinate of $F$ satisfies $F_i \in H^0(\P^n,\varphi^*(\O_X(D_i)))$.
\end{remark}

From now on we will only consider rational maps admitting complete polynomial liftings. If no confusion arises, we will use the same notation $F$ for both the rational map and its complete polynomial lifting.

We end this section by showing that if $n\geq m$, then for a generic map $F: \PP^n\dashrightarrow X$ the associated morphism of Cox rings $F^*=S_X\to S_{\PP^n}$ is flat. This condition will be used later in order to deal with generic pullbacks of foliations.

\begin{proposition}Let  $F_1,\dots,F_m$ be homogeneous elements defining a regular sequence in $\CC[x_0,\dots,x_n]$. If $n\geq m$, then the morphism $F^*:\CC[z_1,\dots,z_m]\to \CC[x_0,\dots,x_n]$ such that $z_i\mapsto F_i$ 
is flat.
\end{proposition}
\begin{proof}Let $F_1,\dots,F_m$ be an homogeneous regular sequence in $\CC[x_0,\dots,x_n]$. Since we are assuming $n\geq m$, there must exist some homogeneous polynomials $F_{m+1},\dots,F_{n+1}$ such that the sequence $F_1,\dots F_{n+1}$ is an homogeneous system of parameters. In particular, these elements are algebraically independent and therefore $\CC[F_1,\dots,F_{n+1}]$ is a polynomial ring. Also, by \cite[Theorem 5.9, p.~35]{S} we know that $\CC[x_0,\dots,x_n]$ is a free $\CC[F_1,\dots,F_{n+1}]$-module. Since the map
\begin{align*} \CC[z_1,\dots,z_m] &\to \CC[F_1,\dots,F_{m}]\\
z_i&\mapsto F_i
\end{align*}
is an isomorphism, we can conclude that $\CC[x_0,\dots,x_n]$ is a flat $\CC[z_1,\dots,z_m]$-module and the proposition follows.
\end{proof}

\subsection{Foliations and moduli spaces}

Let $X$ be a  complete normal variety, $X_r$ be the smooth locus of $X$ and $j:X_r \rightarrow X$ its natural inclusion. 

\begin{definition}\label{foliation}
A singular foliation $\F$ of codimension $1$ on $X$ is a singular foliation of codimension $1$ on $X_r$.
\end{definition}

 Being $X_r$ a smooth complex algebraic variety, a singular foliation $\F$ of codimension $1$ on $X_r$ is induced by a twisted differential form $\omega_r \in H^0(X_r,\Omega^1_{X_r}(\L))$, unique up to multiplication by elements in $H^0(X_r,\O_{X_r}^*)$, for some line bundle $\L\in Pic(X_r)$ satisfying the integrability condition
 $\omega_r \wedge d\omega_r = 0$ and vanishing in codimension at least $2$.

Following \cite[Theorem 8.0.4, p.~348]{CLS}, we get $j_*\L = \O_X(\D)$, for some Weil divisor $\D$ on $X$. Then, by \cite[Cor. 1.7, p.~127]{hartstable} and \cite[Theorem 8.0.1, p.~347]{CLS} we have
 \[j_*(\Omega_{X_r}^1\otimes \L) = (\Omega^1_X\otimes \O_X(\D))^{\vee\vee}.\]
Thus, a foliation $\F$ on $X$ gives rise to  a nonzero global section $\omega_\F \in H^0(X,(\Omega^1_X\otimes \O_X(\D))^{\vee\vee})$ such that
\begin{equation}\label{integrability}
\omega_\F \wedge d\omega_\F =0,
\end{equation}
as an element of $H^0(X,(\Omega_X^3\otimes \O_X(\D)^{\otimes 2})^{\vee\vee}) $ and whose zero locus has codimension greater or equal than $2$.

\medskip

 Recall from the previous sections that in the case $X$ is a toric variety every element $\omega \in H^0(X,(\Omega^1_X\otimes \O_X(\D))^{\vee\vee})$ admits a description in homogeneous coordinates of the form
 \begin{equation}\label{omega}
\omega = \sum_{i=1}^m A_i(z)dz_i,
\end{equation}
 where $A_i$ is homogeneous of degree $[\D -\D_i]\in Cl(X)$, satisfying $\imath_{R_k}(\sum_{i=1}^m A_i(z)dz_i)$ for every $1\le k \le m-q$.

 We will use the notation $\omega$ for both the differential form and the foliation induced by it when no confusion arises.

\begin{definition} The singular scheme $Sing(\F_\omega)$ of a foliation $\F_\omega$ is defined as the zero-locus of the global section $\omega\in H^0(X,(\Omega^1_X\otimes \O_X(\D))^{\vee\vee})$. We will denote by $Sing(\F_\omega)_{set}$ its support. In the case $X$ is a toric variety and $\omega$ is as in \cref{omega}, $Sing(\F_\omega)$ is the closed subscheme defined by the homogeneous ideal $J(\omega) = <A_1,\ldots, A_m>$.
\end{definition}

\begin{remark}
Consequently, every foliation of codimension 1 on a toric variety $X$ can be described by a homogeneous differential form $\omega \in \widehat{\Omega}_S^1(\D)$, for some $\D \in Cl(X)$, satisfying in homogeneous coordinates the equation \eqref{integrability} and  $\codim(\V(J(\omega)))\ge2$.

Moreover, two such forms $\omega$ and $\omega'$ define the same foliation if and only if there exists an element $\lambda\in H^0(X,\O_X^*)$ such that $\omega=\lambda\omega'$. We can thus present the spaces parameterizing singular foliations on $X$.
\end{remark}

\begin{definition} Let $X$ be a normal variety and $\D\in Cl(X)$.
The \emph{moduli space of codimension $1$ singular foliations} of degree $\D \in Cl(X)$ on $X$ is the quasi-projective variety
\[
\F^1(X,\D)=\{[\omega]\in \P H^0(X,(\Omega^1_X\otimes \O_X(\D))^{\vee\vee}): \omega \wedge d \omega =0  \,\mbox{ and } \, \codim(Sing(\omega))\geq 2   \}.
\]
\end{definition}

\begin{remark}
When $X$ is a surface, the integrability condition is trivial and therefore the moduli space of degree $\D$ foliations is a Zariski open in  $\P H^0(X,(\Omega^1_X\otimes \O_X(\D))^{\vee\vee})$.

On the other hand, by duality every codimension $1$ foliation on a  surface $X$ is induced by a twisted vector field $Y\in H^0(X,(\T X \otimes \O_X(\D'))^{\vee\vee})$ for some $\D'\in Cl(X)$. In case $X$ is a toric surface, the differential form defining the foliation is
\[\alpha_Y=\ii_Y\Omega_X\in H^0(X,(\Omega^1_X\otimes \O_X(\D'-\omega_X))^{\vee\vee}),\]
where $\ii_Y$ denotes  the contraction with $Y$ and $\Omega_X$ is defined in \cref{canonical}.
\end{remark}

\section{The singular and the Kupka schemes}\label{thirdsection}

We will now define a certain type of singularities that will be central for our analysis, namely Kupka singularities. Throughout this section, unless stated otherwise, we will denote by $X$ a complete simplicial toric variety.

\begin{definition}Let $X$ be a normal variety, $\D\in Cl(X)$ and $\alpha\in H^0(X,$ $(\Omega^1_X\otimes \O_X(\D))^{\vee\vee})$ defining a foliation on $X$. We say that $p\in Sing(\omega)_{set}$ is a \emph{Kupka point} if $d\alpha(p)\neq 0$.
\end{definition}

\begin{remark}
Around a Kupka singularity $p\in X_{r}$ the foliation can be described as the pullback of a germ of foliation on $(\CC^2,0)$ having an isolated singularity at the origin. In particular, the set of smooth Kupka points of a foliation is equidimensional of codimension $2$, see \cite{kupka} or \cite[Fundamental Lemma, p.~406]{medeiros}.
\end{remark}

\begin{definition}\label{kupkadef}
We define the \emph{Kupka set} $\KK_{set}(\alpha)$ as the Zariski closure of the set of Kupka points, that is 
\[
\KK_{set}(\alpha)=\overline{\{p\in Sing(\alpha)_{set}:\  d\alpha(p)\neq 0\}}\ .
\]
\end{definition}

We extend the definitions of \cite[Section 4.2, p.~1035]{MMQ} related to the Kupka singularities of $\alpha$  to this context.

\begin{definition}\label{kupkaideal} The \emph{Kupka scheme} $\KK(\alpha)$ is the schematic support of $d\alpha$ at $\Omega_{S}^2\otimes_S (S\big/J(\alpha))$. Then, $\KK(\alpha)\subseteq X$ is the scheme associated to the homogeneous ideal 
\[
K(\alpha)=\ann(\overline{d\alpha})+J(\alpha)\subseteq S,  \hspace{0.2cm} \mbox{where} \hspace{0.2cm}\overline{d\alpha}\in \Omega_{S}^2\otimes_S (S\big/J(\alpha)).
\]
\end{definition}

Let us recall the notion of
\emph{ideal quotient} of two $S$-modules $M$ and $N$
\[
(N:M) := \left\{a\in S: a.M\subseteq N\right\}.
\]
See \cite[Exercise 1.12, p.~8 and Corollary 3.15, p.~43]{atiyahmacdonald} for basic properties. In the case of two ideals $I,J\subseteq S$ the \emph{saturation} of $J$ with respect to $I$ is defined as
\[
\left(J:I^\infty\right) := \bigcup_{d\geq 1} \big(J:I^d\big).
\]
Then it is also possible to describe $K(\alpha)$ as 
\begin{equation*}\label{Kbisbis}
K(\alpha)=(J\cdot \Omega_S^2: d\alpha)\ . 
\end{equation*}
Let us denote  the ideal of polynomial coefficients of a differential form $\eta\in\Omega_S^r$ by $\II(\eta)$. With this notation, we have $J(\alpha)=\II(\alpha)$. Since $\Omega_S^2$ is a free module, we also get
\begin{equation}\label{Kbis}
K(\alpha)=(J(\alpha):\II(d\alpha))\ .
\end{equation}
As for the Kupka set, its homogeneous ideal coincides with the saturation of $J(\alpha)$ with respect to $\II(d\alpha)$, \emph{i.e.},
\begin{equation}\label{Ksetbis}
K_{set}(\alpha) = \sqrt{(J(\alpha):\II(d\alpha)^\infty)}. 
\end{equation}

\begin{remark}
The reduced structure of the Kupka scheme may be supported on a bigger space than the Kupka set. See for instance \cite[Example 4.5, p.~1034]{MMQ}. However, when the singular locus is reduced we have the following.
\end{remark}

\begin{lemma}\label{K=Kset} If $p$ is a reduced point of $Sing(\alpha)$,  \emph{i.e.} $J(\alpha)_p=\sqrt{J(\alpha)}_p$, then
$$
K(\alpha)_p = K_{set}(\alpha)_p.
$$
\end{lemma}
\begin{proof}
This follows immediately from the equalities
\[
K(\alpha)_p=(J(\alpha)_p:\II(d\alpha)_p)=(J(\alpha)_p:\II(d\alpha)^\infty_p)=K_{set}(\alpha)_p.
\]
\end{proof}

\subsection{Singularities of foliations in toric surfaces}
\label{singularities}
The aim of this section is to describe the singular scheme of a generic foliation on a toric surface $X$.
Recall that every foliation on $X$ is induced by a twisted vector field $Y\in H^0(X,H^0(X, (TX\otimes \O_X(\D))^{\vee\vee})$ for some $\D\in Cl(X)$. In this case, the Zariski differential form defining the foliation is $\alpha_Y=\ii_Y\Omega_X\in H^0(X,(\Omega^1_X\otimes \O_X(\D-\omega_X))^{\vee\vee})$.

We will begin by analyzing the singularities that are contained in $X_{r}$. A similar approach has also been used in \cite[Section 2.3, p.~6]{CP2}.

\begin{proposition}\label{kupkaestodo} Let $X$ be a toric surface, $\D\in Pic(X)$ a very ample line bundle and $\D'\in Cl(X)$ such that the natural map 
$$H^0(X,(T X\otimes \O_X(\D'))^{\vee\vee})\otimes_\C \O_X \to (T X \otimes \O_X(\D'))^{\vee\vee}$$ is non-zero at every fiber over $X_r$. Then for a generic $Y\in H^0(X, (TX \otimes \O_X(\D+\D'))^{\vee\vee})$, the scheme $Sing(\alpha_Y)\cap X_{r}$ is reduced and consists only of Kupka singularities.
\end{proposition}
\begin{proof}Let $d=\dim  (\P H^0(X,(TX \otimes \O_X(\D+\D'))^{\vee\vee}))$ and 
$$\W\subseteq \P H^0(X,(TX \otimes \O_X(\D+\D'))^{\vee\vee})\times X_{r}$$ 
be the incidence variety defined by
\begin{equation*}
\W=\{ (Y,p) \ : \,   \mbox{ $\alpha_Y(p)= d(\alpha_Y)(p)=0$ or $p$ is a non-reduced point of } Sing(\alpha_Y) \}
\end{equation*}
and equipped with both projections  $\xymatrix@1{   \P H^0(X,(TX \otimes \O_X(\D+\D'))^{\vee\vee})& \W \ar[r]^-{\pi_2} \ar[l]_-{\pi_1} & X_r}$. For every $p\in X$ the space of sections $Z$, where $Z(p)=0$, identifies with the kernel of the evaluation $ev_p$ of vector fields at $p$. Therefore, it determines a codimension $2$ linear space in $ \P H^0(X,(TX \otimes \O_X(\D+\D'))^{\vee\vee})$. 
 Our hypotheses on $\D$ and $\D'$ are in order to guarantee that for each $p\in X_r$ there exists an element $Y\in H^0(X,(TX \otimes \O_X(\D+\D'))^{\vee\vee})$ such that $p$ is a reduced Kupka singularity of the foliation induced by $Y$. Indeed, if $Z$ is a global section of $(TX\otimes\O_X(\D'))^{\vee\vee}$ not vanishing at a point $p$ and $f\in \O_X(\D)$ such that $\{ f=0\}$ is reduced and not tangent to $Y$ at $p$, then $Y=f Z$ satisfies the above.
This implies that the fiber $\pi_2^{-1}(p)$ has dimension  $\le d-3$. By the Fiber Dimension Theorem \cite[Theorem 3, p.~49]{M}
\[
\dim(\W)\leq d-3+2=d-1.
\]
In particular, a generic  $Y\in \P H^0(X,(TX \otimes \O_X(\D+\D'))^{\vee\vee})$ is not in the image of $\pi_1(\W)$ and hence the claim. 
\end{proof}

\begin{remark} The proposition above shows that under the right hypotheses on $\D$ the non-Kupka points of a generic foliation of degree $\D$ are contained in the singular points of $X$. In order to assure that every singularity of a generic foliation is a Kupka singularity, it would be sufficient to assure that $Sing(\F)\cap Sing(X)=\emptyset$. This is an \emph{arithmetical} condition on $\D$: it is equivalent to requiring that not every homogeneous element in $\widehat{\Omega}^1_S$ of degree $\D$ vanishes on each of the orbits in $\pi_X^{-1}(Sing(X))$.
\end{remark}

\begin{example}
Let $X=\P^2(a_0,a_1,a_2)$. We will always assume that the weights $a_i$ are coprime by pairs.
Let $\alpha =\sum_{i=0}^2 A_i(z)dz_i \in \widehat{\Omega}^1_S(d)$ and $Z$ an homogeneous polynomial vector field. We recall the Cartan formula
\[ \imath_Zd{\alpha} + d\imath_Z{\alpha} = L_Z({\alpha}) \]
where $L_Z(\alpha)$ denotes the Lie derivative of $\alpha$ with respect to  the vector field $Z$. Using this equation in the case $Z=R = \sum_{i=0}^2 a_iz_i \tfrac{\partial}{\partial z_i}$ 
we get 
\begin{equation}\label{euler3}
\imath_Rd{\alpha} + d\imath_R{\alpha} = deg({\alpha}){\alpha}\ .
\end{equation}
Following \cite[Theorem (ii) p.~39]{DOLGACHEV} every twisted vector field in $\P^2(a_0,a_1,a_2)$ admits a description as an homogeneous polynomial vector field.
For an element
\[Y=\sum_{i=0}^2 B_i \frac{\partial}{\partial z_i} \] 
 of degree $\ell$, let $\alpha_Y=\ii_Y\Omega_X\in \widehat{\Omega}^1_S(\ell+a_0+a_1+a_2)$ and $div(Y)= \sum_{i=0}^2\frac{\partial B_i}{\partial z_i}$ be its divergence. 
Using  \cref{euler3} straightforward computation shows
\begin{equation}\label{formula}
d{\alpha_Y} = div(Y)\Omega_{X} - (\ell+a_0+a_1+a_2)\, \imath_Y dz_0\wedge dz_1\wedge dz_2,
\end{equation}
The vector $Y - \frac{div(Y)}{\ell+a_0+a_1+a_2}R$ defines the same element in $H^0(X, (\T X\otimes\O_X(\ell))^{\vee\vee})$ as $Y$ and satisfies $div(Y)=0$. Thus we can assume that the foliation is induced by an homogeneous polynomial vector field
with zero divergence. 
From  \cref{formula} we can deduce 
\[Sing(d\alpha_Y)_{set}= \{z\in \CC^3: B_0(z)=B_1(z)=B_2(z)=0\}.\]
Recall that  the singularities of $\P^2(a_0,a_1,a_2)$ are of the form $p_0=[1:0:0]$, $p_1=[0:1:0]$ or $p_2=[0:0:1]$, depending on whether the corresponding $a_i>1$. The degrees $\ell$ such that a generic foliation satisfies $Sing(\F)\cap Sing(X)=\emptyset$ must then coincide with the $\ell\in \ZZ$ admitting
at least one vector field  $Y = \sum_{i=0}^2 B_i \tfrac{\partial}{\partial z_i}$ of degree $\ell$ with
\[\{p\in\P^2(a_0,a_1,a_2): B_0(p)=B_1(p)=B_2(p)=0\}\cap Sing(\P^2(a_0,a_1,a_2))=\emptyset.\]
Straightforward calculation shows that this are exactly the degrees satisfying 
\[
\ell+a_0\equiv 0 \ \text{mod}(a_i) \text {\ \ \  or  \ \ \  }\ell+a_1\equiv 0 \ \text{mod}(a_i) \text {\ \ \ or\ \ \  }\ell+a_2\equiv 0 \ \text{mod}(a_i)
\]
for every $0\leq i\leq 2$ such that $a_i>1$.
\end{example}

Let us now state the genericity conditions that will be needed in the following sections.

\begin{definition} Let $\D\in Cl(X)$ and $\alpha\in \widehat{\Omega}^1_S(\D)$. We say that $\alpha$ is \emph{generic} if every singularity is a reduced Kupka singularity, \emph{i.e.}, if
$$(J(\alpha):I_Z^\infty)=(K(\alpha):I_Z^\infty)$$ is radical, where $I_Z$ is the irrelevant ideal of $X$.
\end{definition}

\begin{remark}\label{remKupka}
Let $X$ be a toric surface other than $\P^2(a_0,a_1,a_2)$ and $\D\in Cl(X)$. Then for a general $\alpha \in \widehat{\Omega}^1_S(\D)$ , the ideal $J(\alpha)$ could have components supported on primes of $I_Z$ that appear outside the ideal of the Kupka set and even the Kupka scheme. 
\end{remark}

Let $\D\in Cl(X)$ and $\F$ be the foliation on $X$ induced by a  differential form $\alpha=\sum_{i=1}^mA_i(z) dz_i\in \widehat{\Omega}^1_S(\D)$. Its pullback by $\pi_X$ induces a foliation on $\CC^m\setminus Z$ (and therefore on the affine space $\CC^m$) satisfying
\[ \omega_{\pi_X^*\F}=\alpha=\sum_{i=1}^m A_i(z) dz_i .\]

\begin{proposition} \label{equidim}Let $X$ be a toric surface and $\F$ a foliation on $X$. Then $Sing(\pi_X^*\F)$ is equidimensional of codimension $2$. In particular, if the polynomial differential form $d\alpha$ does not vanish in codimension $2$, then $\sqrt{J(\alpha)}=\sqrt{K(\alpha)}$.
\end{proposition}
\begin{proof} Since $\pi_X^*\F$ is a codimension $1$ foliation whose tangent sheaf splits as a sum of line bundles we can proceed as in the proof of \cite[Proposition 12, p.~11]{V}.
\end{proof}

If $\F$ is generic then every singularity $p\in \CC^m\setminus Z$ of $\pi_X^*\F$ is reduced and of Kupka type. It could be the case, however, that other type of singularities appear inside $Z$.
To have control over such a situation,  we fix the following:

\begin{definition}\label{gamaalfa}
When $m=|\Sigma(1)|>3$, we define
\begin{align*}
&\Gamma_\alpha=\{\text{non-consecutive pairs }(i,j):\ i<j,\ J(\alpha)\subset I_{ij}\}\ ,\\
&\Gamma_{\alpha,\K} =\{(i,j)\in \Gamma_{\alpha,\K}:\ K(\alpha) \subset I_{ij} \}\ \text{ and }\\
&\Gamma_{\alpha,\K}^{set}=\{(i,j)\in \Gamma_{\alpha,\K}:\ K_{set}(\alpha) \subset I_{ij} \}\ ,
\end{align*}
where $I_{ij}$ is defined in \cref{Iij}. If $m = 3$, we consider $ \Gamma_\alpha= \Gamma_{\alpha,\K} = \Gamma_{\alpha,\K}^{set}= \emptyset$.
\end{definition}

\begin{definition}  Let $\D\in Cl(X)$ and $\alpha\in \widehat{\Omega}^1_S(\D)$. We say that $\alpha$ is \emph{affinely generic} if $Sing(\pi^*\F)$ is reduced and every irreducible component of $Sing(\pi^*_X\F)$ consists generically of Kupka points, \emph{i.e.}, if $J(\alpha)=K(\alpha)$ is radical.
\end{definition}

\subsection{The singular scheme of a pullback foliation}

Let $X$ be a simplicial complete toric surface, $\alpha \in \widehat{\Omega}^1_S(\D)$ and $F:\P^n\dashrightarrow X$ be a dominant rational map with a complete polynomial lifting of degree $\overline{e}$. We will  write $\omega=F^*\alpha$ and suppose that $n>m$. This hypothesis guarantees that the first condition in the following definition is indeed generic in the corresponding space.

\begin{definition}\label{generic}We say that the pair $(F,\alpha)$ is \emph{generic} if $\alpha$ is generic and
\begin{enumerate}[I)]
	\item the critical points $C(F)$ of the morphism $F:\CC^{n+1}\to \CC^m$ are of codimension at least three and
	\item the scheme $Sing(\omega)$ is reduced along 
\[ C(F,\alpha):=\overline{\{p\in C(F) : \alpha(F(p))\neq 0 \mbox{ and } Im(d_pF)\subseteq \ker(\alpha(F(p)) \}} \]
and has no embedded components supported at $Sing(\omega)\cap C(F)$.
\end{enumerate}
\end{definition}

\begin{definition} \label{affgeneric} A generic pair $(F,\alpha)$ is said to be \emph{affinely generic} if $\alpha$ is affinely generic and the scheme associated to the ideal $F^*K(\alpha)=\langle A_1(F),\dots, A_m(F)\rangle$ has no embedded components supported at the origin $0\in \CC^{n+1}$.
\end{definition}
\begin{remark} The conditions above define a Zariski open set in the space of pairs 
\[\{ (F,\alpha) : \deg(F)=\overline{e} \mbox{ and } \deg(\alpha)=\D \}. \]
We refer to \cite[Th\'eor\`eme (12.2.4), p.~183]{G} for this property on condition $\textit{II)}$. 
\end{remark}

\begin{lemma}\label{singcomponents} The singular set of $\omega$ is
\[ Sing(\omega)_{set}=\overline{ F^{-1}(Sing_{set}(\alpha))} \cup \bigcup_{(i,j)\in\Gamma_\alpha}\{ F_i=F_j=0\} \cup C(F,\alpha), \]
where each of these varieties do not share irreducible components and $C(F,\alpha)$ consists of non-Kupka points.
\end{lemma}
\begin{proof} Recall from \cref{equidim} that the zero locus of $\alpha$ in $\CC^m$ is equidimensional of codimension $2$. In particular, its irreducible components contained in $Z$ are of the form $\{z_i=z_j\}=0$ for $(i,j)\in \Gamma_\alpha$. Then every singular point $p$ of $\omega$ such that $\alpha(F(p))=0$ must lie in the first two elements of the above union.
 
Now let $p\in Sing(\omega)_{set}$ be such that $\alpha(F(p))\neq 0$. Being $p$ a singularity of $\omega$, the image of $d_p{F}$ is contained in $ker (\alpha(F(p)))$, \emph{i.e.}, $p\in C(F,\alpha)$. It follows from the integrability condition $\alpha\wedge d\alpha=0$ that $d\alpha(v,w)({F}(p))=0$ for every $v,w\in ker (\alpha(F(p)))$. Indeed, if $v,w\in \ker(\alpha)$ then
\[ \ii_{v\wedge w}\left(\alpha\wedge d\alpha\right)=\ii_{v\wedge w}(d\alpha)\alpha=0\]
and therefore $d\alpha(v,w)=0$. Since ${F}^*(d\alpha) = d\omega$,  we get  $d\omega(p)=0$. 
\end{proof}
As for the Kupka set, we have
\begin{lemma}\label{kupka} If $(F,\alpha)$ is generic, then 
\[
\KK_{set}(\omega)= \overline{F^{-1}(Sing_{set}(\alpha))} \cup \bigcup_{(i,j)\in \Gamma_{\alpha,\K}^{set}} \{F_i=F_j=0\}.
\]
\end{lemma}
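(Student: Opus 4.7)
The plan is to prove the set-theoretic equality and then deduce the stated ideal equality from it. Writing $K_{set}(\alpha)=\langle k_1,\dots,k_r\rangle$, the pulled-back ideal $F^*(K_{set}(\alpha))=\langle k_1(F),\dots,k_r(F)\rangle$ cuts out, in $\P^n$, the preimage $F^{-1}(\KK_{set}(\alpha))$ on the regular locus of $F$ together with precisely those base-locus components $\{F_i=F_j=0\}$ for which every $k_\ell$ lies in $\langle z_i,z_j\rangle$; this last condition is exactly $K_{set}(\alpha)\subset I_{ij}$, i.e.\ $(i,j)\in\Gamma_{\alpha,\K}^{set}$. This gives the translation between the two formulations.

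For the inclusion $\supseteq$, the key input is that on the regular locus of $F$ the identities $\omega=F^*\alpha$ and $d\omega=F^*d\alpha$ allow us to compare the Kupka conditions on both sides. If $F(p)\in\KK_{set}(\alpha)$, condition (I) of almost genericity forces $p$ to be a non-critical point of $F$, so $dF(p)$ has rank $2$ onto a $2$-dimensional target; hence $\omega(p)=0$ and $d\omega(p)\ne 0$ (using that $\wedge^2 dF(p)$ is an isomorphism on $\wedge^2 T_{F(p)}X\cong\CC$), placing $p$ in $\KK_{set}(\omega)$ and giving the first piece after taking closure. For a pair $(i,j)\in\Gamma_{\alpha,\K}^{set}$, since $\Gamma_{\alpha,\K}^{set}\subseteq\Gamma_\alpha$, every coefficient $A_\ell$ of $\alpha$ lies in $I_{ij}$, so $\omega$ vanishes identically on $\{F_i=F_j=0\}$; the condition $K_{set}(\alpha)\subset I_{ij}$ further places $Z_{ij}\subset\CC^m$ inside $V(K_{set}(\alpha))$, so Kupka points of $\alpha$ accumulate to $Z_{ij}$, their $F$-preimages are Kupka points of $\omega$ accumulating to $\{F_i=F_j=0\}$, and this component lies in $\KK_{set}(\omega)$ by closure.

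For the inclusion $\subseteq$, take a generic Kupka point $p\in\KK_{set}(\omega)$. If $p$ lies off the base locus, the non-vanishing $d\omega(p)=F^*d\alpha(p)\ne 0$ combined with $\dim T_{F(p)}X=2$ forces $\wedge^2 dF(p)\ne 0$, hence $dF(p)$ has rank $2$ and in particular $p\notin C(F)$; surjectivity of $dF(p)$ then yields $\alpha(F(p))=0$ and $d\alpha(F(p))\ne 0$, so $F(p)\in\KK_{set}(\alpha)$. If instead a component $\{F_i=F_j=0\}$ of the base locus is contained in $\KK_{set}(\omega)$, then vanishing of $\omega$ on a Zariski-dense subset of this component forces $A_\ell(F)|_{\{F_i=F_j=0\}}=0$ and hence $A_\ell\in I_{ij}$ for every $\ell$ (since the complete polynomial lifting is sufficiently generic to ensure that no non-trivial $G\in S/I_{ij}$ evaluates to zero on $\{F_i=F_j=0\}$), giving $(i,j)\in\Gamma_\alpha$; generic non-vanishing of $d\omega$ there lifts analogously to non-vanishing of $d\alpha$ on $Z_{ij}$, so $Z_{ij}\subset V(K_{set}(\alpha))$ and $(i,j)\in\Gamma_{\alpha,\K}^{set}$.

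The main obstacle is the base-locus analysis of the reverse inclusion: one needs to faithfully translate vanishing of $F$-pullbacks of polynomials on a component $\{F_i=F_j=0\}$ back to containment of the original polynomial in the irrelevant prime $I_{ij}$, a step which ultimately rests on the genericity of the complete polynomial lifting preventing spurious algebraic relations among $F_i,F_j$ and the remaining $F_k$. All other parts reduce to straightforward pointwise applications of the chain rule together with the rank constraint imposed by the $2$-dimensional target.
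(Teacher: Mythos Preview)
Your argument is correct and broadly parallel to the paper's: both separate the base-locus contribution via $\Gamma_{\alpha,\K}^{set}$ and compare Kupka conditions through $\omega=F^*\alpha$, $d\omega=F^*d\alpha$ on the complement.

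The one noteworthy difference lies in the off-base-locus part of the inclusion $\subseteq$. You argue that $d\omega(p)\ne 0$, together with $\dim T_{F(p)}X=2$, forces $\wedge^2 dF(p)\ne 0$, hence $dF(p)$ is surjective and both $\alpha(F(p))=0$ and $d\alpha(F(p))\ne 0$ follow at once. The paper works instead in the homogeneous target $\CC^m$ and takes the contrapositive: if $\omega(p)=0$ but $\alpha(F(p))\ne 0$, the image of $d_pF$ lies in $\ker\alpha(F(p))$, and the integrability $\alpha\wedge d\alpha=0$ forces $d\alpha$ to vanish on that kernel, whence $d\omega(p)=0$. Since $X$ is a surface, integrability is automatic and the two arguments are equivalent; your version has the pleasant feature of exhibiting $F(p)$ directly as a Kupka point of $\alpha$, so you do not invoke the almost-generic identification $Sing_{set}(\alpha)=\KK_{set}(\alpha)$ at this step, whereas the paper only concludes $F(p)\in Sing_{set}(\alpha)$ and then appeals to that identification. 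Conversely, the paper's formulation in $\CC^m$ sidesteps any worry about whether $T_{F(p)}X$ is well-behaved at singular points of $X$. Your handling of the base-locus components (the accumulation argument and the remark on faithfulness of $F^*$ modulo $I_{ij}$) is more explicit than the paper's, which simply declares the equivalence ``clear''; neither account fully pins down this step, but your heuristics are sound.
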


\begin{proof} It follows from the above proof that for every Kupka point $p$ of $\omega$ its image $F(p)$ must satisfy $\alpha(F(p))=0$ and $d\alpha(F(p))\neq 0$. Being $\alpha$ generic, its  Kupka set in $\CC^m$ is the union of $ \pi^{-1}_X(Sing(\alpha)_{set})$ and  $\{z_i=z_j=0\}$ for $(i,j)\in \Gamma_{\alpha,\K}^{set}$. On the other hand, being $(F,\alpha)$ generic we can assure that every element in
$$F^{-1}(Sing_{set}(\alpha)) \cup \bigcup_{(i,j)\in \Gamma_{\alpha,\K}^{set}} \{F_i=F_j=0\}$$ 
is generically of Kupka type and the lemma follows.
\end{proof}

We shall now describe the Kupka and singular schemes of a generic pullback foliation.

\begin{remark}\label{remJmono}  Let $\alpha=\sum_{i=1}^m A_i(z) dz_i$.
Observe that the ideal $J(\omega)$ is generated by 
\[
\sum_{i=1}^{m} A_i(F)\frac{\partial F_i}{\partial x_j},
\] 
for $j=0\dots,n$. Let $J_F$ be the Jacobian matrix of the polynomial lifting  $F:\CC^{n+1}\to \CC^m$. Then, $J(\omega)$ is in fact generated by  the entries of the vector 
\[
J_F^t \cdot (A_1(F),\dots,A_m(F)).
\]
Also, the coefficients of the differential form $d\omega$ are the entries of 
\[ J_F^t \cdot (A_{ij}(F)) \cdot J_F,\]
where $A_{ij}$ satisfies $A_{ij}=-A_{ji}$ and $d\alpha=\sum_{i,j=1}^{m} A_{ij} dz_i\wedge dz_j\in \Omega^2_{S}$.
In particular, for a point $p\in \PP^n$  outside $C(F)$ we have
\[ J(\omega)_p=F^*(J(\alpha))_p \]
and
\[\II(d\omega)_p=F^*(\II(d\alpha))_p .\]
\end{remark}

\begin{theorem}\label{kupkascheme} Let $(F,\alpha)$ be generic. Suppose further that the ring homomorphism $F^*:S_X\to S_{\PP^n}$ is flat. Then 
$\K(\omega)$ is the closed subscheme defined by the ideal $F^*K(\alpha)$.
\end{theorem}
\begin{proof} 
 Being $(F,\alpha)$ generic we know that $Sing(\omega)$ is reduced along $C(F,\alpha)$.  Since these points are not of Kupka type, $\K(\omega)$ has no irreducible component supported at $C(F,\alpha)$ (because by \cref{K=Kset} such irreducible component would actually lie in $\K_{set}(\omega)$).

By conditions I) and II) in \cref{generic}, the Jacobian matrix $J_F$ has maximal rank at a generic point of an irreducible component of $Sing(\omega)$ supported at
\[ \overline{F^{-1}(Sing_{set}(\alpha))} \cup \bigcup_{(i,j)\in \Gamma_\alpha} \{F_i=F_j=0\}. \]

By the above remark, if $p$ is the generic point  of an irreducible component of the Kupka scheme $\K(\omega)$ we have 
\[ K(\omega)_p=( J(\omega)_p: \II(d\omega)_p )=( F^*(J(\alpha))_p: F^*(\II(d\alpha))_p ).  \]
Since $F^*$ is flat, by \cite[Chap. 1, §2, Remarque, p.~41]{B}, we can conclude that $K(\omega)_p=F^*K(\alpha)_p$ and hence the claim.
\end{proof}

\begin{lemma}\label{cuenta} Let $\alpha\in \widehat{\Omega}^1_S(\D)$ of the form $\alpha=\sum_{i=1}^m A_i(z) dz_i $ and $\omega=F^*\alpha$. Then $A_k(F) d\omega= \omega\wedge \eta_k$ where $\eta_k=F^*(\ii_{\frac{\partial}{\partial z_k} }d\alpha)\in \Omega^1_S$.
\end{lemma}
\begin{proof}
 Contracting the equation
$\alpha\wedge d\alpha = 0$ 	with the vector field $\frac{\partial}{\partial z_k}$ we get
$$A_k d\alpha = \alpha\wedge (\ii_{\frac{\partial}{\partial z_k}}d\alpha).$$
Since the pullback commutes with the exterior differential we have
\begin{equation*}\label{ecu11}
A_k(F) d\omega= \omega\wedge \eta_k ,
\end{equation*}
where $\eta_k = F^\ast(\ii_{\frac{\partial}{\partial z_k}}d\alpha) \in \Omega_S^1 $.
\end{proof}

In the case where $(F,\alpha)$ is affinely generic, this easily implies the following.
\begin{corollary}\label{kupkagen} If $(F,\alpha)$ is affinely generic and $F^*:S_X\to S_{\PP^n}$ is flat, then 
\[ K(\omega)=\langle A_1(F),\dots,A_m(F)\rangle. \]
\end{corollary}
\begin{proof} Recall that $K(\omega)=(J(\omega)\cdot \Omega^2_S : d\omega)$. Since $\alpha$ is affinely generic, by \cref{cuenta} above  we have
$$F^*K(\alpha)=\langle A_1(F),\dots,A_m(F)\rangle \subseteq K(\omega).$$
By \cref{kupkascheme} and our hypothesis on $(F,\alpha)$ being affinely generic we know that
\[ (K(\omega): \langle x_0,\dots,x_n\rangle^\infty)=(F^*K(\alpha): \langle x_0,\dots,x_n\rangle^\infty)=F^*K(\alpha). \] 
In particular,  $K(\omega)\subseteq F^*K(\alpha)$  and therefore $K(\omega)=\langle A_1(F),\dots,A_m(F)\rangle$ .
\end{proof}

\begin{corollary}\label{conclusionaffinegeneric} If $(F,\alpha)$ is affinely generic and $F^*:S_X\to S_{\PP^n}$ is flat, then $Sing(\omega)$ is reduced and 
\[ Sing(\omega)=\K_{set}(\omega)\cup C(F,\alpha) .\]
\end{corollary}
\begin{proof} Recall that the support of $Sing(\omega)$ is 
\[ Sing(\omega)_{set}=\overline{ F^{-1}(Sing_{set}(\alpha))} \cup \bigcup_{(i,j)\in\Gamma_\alpha}\{ F_i=F_j=0\} \cup C(F,\alpha). \]
Being $\alpha$ affinely generic we can assure that $\Gamma_\alpha=\Gamma_{\alpha,\K}^{set}$. By \cref{kupka}, the equality
\[\K_{set}= \overline{ F^{-1}(Sing_{set}(\alpha))} \cup \bigcup_{(i,j)\in\Gamma_\alpha}\{ F_i=F_j=0\} \]
also holds. 

By hypothesis $Sing(\omega)$ is reduced along $C(F,\alpha)$. On the other hand, being $\alpha$ affinely generic we know that $J(\alpha)=K(\alpha)$ is radical. Since the morphism $F^*$ is flat, by \cref{kupkascheme} we can also conclude that $Sing(\omega)$ is reduced along $\K_{set}(\omega)$.
\end{proof}

\section{First order unfoldings and deformations}\label{section6}

This section is dedicated to the study of first order deformations and unfoldings of pullback foliations by dominant rational maps $F:\PP^n\dashrightarrow X$.

\

Let $\NN=Spec(\CC[\varepsilon]/(\varepsilon^2))$ be the spectrum of the dual numbers. For a variety $X$ with smooth locus $X_r$ we will use the notation $X[\varepsilon]=X\times \NN$, $\pi_1:X[\varepsilon]\to X$ for the first projection and $j[\varepsilon]: X_r[\varepsilon]\hookrightarrow X[\varepsilon]$ the natural inclusion.

\begin{definition}Let $X$ be a normal variety, $\D\in Cl(X)$ and $\alpha\in H^0(X,j_*(\Omega^1_{X_r} \otimes \O_{X}(\D)\vert_{X_r}))$ a differential form defining a foliation on $X$. A \emph{first order deformation of} $\F_\alpha$ is a family of foliations on $X$ parameterized by $\NN$. This is induced by a unique integrable element $\alpha_\varepsilon\in H^0(X[\varepsilon], j[\varepsilon]_*(\Omega^1_{X_r[\varepsilon]|\NN}\otimes \L_\varepsilon))$ (up to multiplication of global sections of $\O_{X[\varepsilon]}^*$), where $\L_\varepsilon$ is a family of line bundles on $X_r$ and $\alpha_\varepsilon$ restricts to $\alpha$ on the central fiber. We say that $\alpha_\varepsilon$ is of degree $\D$ if $\L_\varepsilon=\pi_1^*(\O_X(\D))$.
\end{definition}

\begin{remark} Observe that a degree $\D$ first order deformation is (up to multiplication by units) an integrable global section of $\pi_1^*(j_*(\Omega^1_{X_r} \otimes \O_X(\D)\vert_{X_r})$. This naturally identifies with the Zariski tangent space $\T_{[\alpha]} \F^1(X,\D)$ to the space of foliations at $[\alpha]$. In particular such a deformation admits a unique description of the form  $\alpha_\varepsilon=\alpha + \varepsilon \eta$ for some $[\eta]\in H^0(X,j_*(\Omega^1_X\otimes\O_X(\D)\vert_{X_r}))/\langle \alpha \rangle$ satisfying the integrability condition
\begin{equation}\label{eqdeformations}
\alpha \wedge d\eta + \eta \wedge d\alpha = 0.
\end{equation}
\end{remark}

Let us now move on to a different type of first order perturbation. Following \cite[2nd. paragraph, p.~11]{MMQ2} for a global version or \cite[(6.9), p.~201]{suwa} for a local one, a first order unfolding of degree $\L$ of a foliation $\F_\alpha$  on a smooth variety $Y$ induced by some $\alpha\in H^0(Y, \Omega^1_Y\otimes \L)$  is given by an element $\widetilde{\alpha}_\varepsilon \in H^0(Y[\varepsilon],\Omega^1_{Y[\varepsilon]}\otimes \L_\varepsilon)$ satisfying $\widetilde{\alpha}_\varepsilon\wedge d\widetilde{\alpha}_\varepsilon=0$, where $\L_\varepsilon$ is a first order deformation of $\L$. In the case $\L_\varepsilon=\pi_1^*\L$,  such an element must be of the form
\[ \widetilde{\alpha}_\varepsilon= \alpha + \varepsilon \eta + h d\varepsilon \]
for some $\eta\in H^0(Y,\Omega^1_Y\otimes \L)$ and $h\in H^0(Y,\L)$. In terms of these sections, the integrability condition  $\widetilde{\alpha}_\varepsilon\wedge d\widetilde{\alpha}_\varepsilon=0$ is guaranteed by the equations
\begin{equation}\label{equnfoldings}
\left\{\begin{aligned}
\alpha\wedge d\eta + d\alpha\wedge\eta = 0\\
h d\alpha = \alpha \wedge (\eta - dh)\ 
\end{aligned}\right.\qquad\iff \qquad h d\alpha = \alpha \wedge (\eta - dh)\ .
\end{equation}
This last equivalence will be detailed in the proof of \cref{unfdef}. Over a normal variety, it is natural to define the following:

\begin{definition} Let $X$ be a normal variety, $\D\in Cl(X)$ and $\F_\alpha$ a foliation induced by some element $\alpha\in H^0(X,j_*(\Omega^1_X\otimes\O_X(\D)\vert_{X_r}))$. A first order unfolding of $\F_\alpha$ is a first order unfolding of $\F_\alpha\vert_{X_r}$, \emph{i.e.}, an integrable element $\widetilde{\alpha}_\varepsilon\in H^0(X[\varepsilon], j[\varepsilon]_*(\Omega^1_{X_r[\varepsilon]}\otimes \L_\varepsilon))$ restricting to $\alpha$ on the central fiber, where $\L_\varepsilon$ is a family of line bundles on $X_r$. We say that $\alpha_\varepsilon$ is of degree $\D$ if $\L_\varepsilon=\pi_1^*\O_X(\D)$.
\end{definition}

Just as in the smooth case, if $\alpha \in H^0(X,j_*(\Omega^1_X\otimes\O_X(\D)\vert_{X_r}))$ is an element defining a foliation $\F_\alpha$ on $X$ then every degree $\D$ first order unfolding of  $\F_\alpha$ admits a description
\[ \widetilde{\alpha}_\varepsilon= \alpha + \varepsilon \eta + h d\varepsilon \]
for some $\eta\in H^0(X,j_*(\Omega^1_X\otimes\O_X(\D)\vert_{X_r}))$ and $h \in H^0(X,\O_X(\D))$ satisfying \cref{equnfoldings}. These elements are of course the natural extensions of the corresponding elements in $X_r$.


\begin{definition}\label{defU}
The space of degree $\D$ first order unfoldings of $\alpha$ is
\[
U(\alpha) = \{(h,\eta)\in H^0(X,\O_{X}(\D)\oplus j_*(\Omega^1_X\otimes\O_X(\D)\vert_{X_r}))  :  h d\alpha = \alpha \wedge (\eta - dh) \}\big / \langle (0,\alpha)\rangle.
\]
\end{definition}

\begin{lemma}\label{unfdef}
Every element $(h,\eta)\in U(\alpha)$ induces a degree $\D$ first order deformation $\alpha_{\varepsilon}= \alpha + \varepsilon \eta$.
Then, we have a well-defined map
\begin{align*}
U(\alpha) &\xrightarrow{\pi_2} \T_{[\alpha]} \F^1(X,\D) \\
(h,\eta) &\longmapsto  [\eta] \ .
\end{align*}
\end{lemma}

\begin{proof}
See \cite[Proposition 2.6]{moli}.
\end{proof}
%
%

Now let $X$ be a toric variety, $F:\P^n\dashrightarrow X$ be a dominant rational map with a complete polynomial lifting of degree $\bar{e}$ and $\alpha$ be a twisted differential form defining a foliation on $X$. Its pullback $\omega:=F^\ast (\alpha)$ admits non-trivial canonical deformations:
if $F_\varepsilon = F + \varepsilon G:\P^n \dashrightarrow X[\varepsilon]$ and $\alpha_\varepsilon = \alpha+\varepsilon\eta $ are first order deformations of $F$ and $\alpha$ respectively, then one can consider the family $\omega_\varepsilon:=F_\varepsilon^*(\alpha_\varepsilon)$. We aim to describe these deformations in a more explicit manner. 

The following theorem is a version of \cite[Lemma 6.7, p.~89]{AD} and  \cite[Lemma 2.2, p.~162]{CLNLPT} for first order deformations. For a dominant rational map $F: Y\dashrightarrow X$ from a smooth variety into a normal (not necessarily toric) $X$ we will denote by $X_r^*$ the intersection of the smooth locus of $X$ and the non-critical values of $F$.

\begin{theorem} \label{teo3}
Let $Y$ be a smooth projective variety of dimension $n$ with discrete Picard group,  $X$ be a normal projective variety of dimension $q\geq 2$, $\D\in Cl(X)$ and $F: Y \dashrightarrow X$  a dominant rational map with connected fibers such that $\codim (X_r\setminus X_r^*)\geq 2$. Let $\alpha\in \F^1(X,\D)$,  $\omega = F^*(\alpha)$   and let $\omega_\varepsilon$ be a first order deformation of $\omega$. The following conditions are equivalent:
\begin{enumerate}[1)]
	\item The fibers of the map $\overline{F}= F\otimes 1:Y[\varepsilon]\dashrightarrow X[\varepsilon]$ are tangent to $\ker(\omega_\varepsilon)$, \emph{i.e.}, at a generic point $p$ the relative tangent sheaf satisfies
$$\T_{\overline{F}} Y[\varepsilon]_p\subseteq \ker(\omega_\varepsilon)_p\subseteq \T_\NN Y[\varepsilon]_p.$$
	\item There exists a degree $\D$ first order deformation $\alpha_\varepsilon$ of $\alpha$ such that $\omega_\varepsilon=\overline{F}^*(\alpha_\varepsilon)$.
\end{enumerate}
\end{theorem}
\begin{proof}  It follows from the definitions that 2) implies 1). 
We shall now prove the other implication. For the sake of clarity we will denote by $p_1:Y[\varepsilon]\to Y$ and $\pi_1:X[\varepsilon]\to X$ the respective naturals projections. Observe that the hypothesis on the Picard group of $Y$ implies that $\omega_\varepsilon$ is of degree $F^*\D$. Let us first show that there exists a deformation $\alpha_\varepsilon$ of $\alpha$ on the open set $\W=X_r^*\setminus Sing(\alpha)$ such that $\omega_\varepsilon=\overline{F}^*(\alpha_\varepsilon)$.

Let $p\in Y$ be a smooth point of $F$ such that $F(p)$ is a smooth point of $X$  and consider a trivialization of $F$ around $p$, \emph{i.e.}, open analytic neighborhoods $\U$ and $\V$ with $p\in \U$ and $F(p)\in\V$ together with  local coordinates $(x_1,\dots,x_n)$ such that $F(x_1,\dots,x_n)=(x_1,\dots,x_q)$. Suppose further that $p$ is not a singular point of $\omega$.
Let us also choose compatible trivializations $\O_X(\D)\vert_\V\simeq^\phi \O_{X}$ and $p_1^*\O_Y(F^*\D)\vert_{\U[\varepsilon]}\simeq^\psi \O_{Y[\varepsilon]}$, \emph{i.e.} such that $\psi\vert_{Y}= F^*\phi$. In these coordinates, condition 1) implies $ \omega_\varepsilon{|_\U} \wedge dx_1\wedge\dots dx_q =0$. Applying Malgrange's Theorem, \cite[Proposition (1.1), p.~67]{malgrange2}, we can get a description of $\omega_\varepsilon{|_\U}$ as
\[
\omega_\varepsilon{|_\U}=\sum_{i=1}^q h_i^\varepsilon(x)\  dx_i,
\]
for some functions $h_i^\varepsilon$ on $\U[\varepsilon]$. The integrability condition $\omega_\varepsilon\wedge d\omega_\varepsilon$ implies that  
\[0=h_i^\varepsilon \frac{\partial h_k^\varepsilon}{\partial x_j}- h_k^\varepsilon \frac{\partial h_i^\varepsilon}{\partial x_j}=\frac{\partial }{\partial x_j}\left(\frac{ h_i^\varepsilon}{ h_k^\varepsilon} \right) \hspace{0.5cm} \forall j>q \hbox{ , } \forall 1\leq i,k\leq q. \]
Since $\omega(p)\neq 0$, we may assume that $h_1^\varepsilon\in \O_{Y[\varepsilon]}^*$. Setting $k=1$ in the equation above, we get $h_i^\varepsilon=h_1^\varepsilon g_i^\varepsilon(x_1,\dots,x_q)$ for some functions $g_1^\varepsilon,\dots,g_q^\varepsilon$. 
Setting $\alpha_\varepsilon=\sum_{i=1}^q g_i^{\varepsilon} dx_i$ on $\V$, this implies
\begin{equation}\label{ecupb} h^1_\varepsilon\overline{F}^*\alpha_\varepsilon=\omega_\varepsilon.
\end{equation}
Being $F$ dominant, $\alpha_\varepsilon$ is also integrable and therefore defines a local deformation of $\alpha$. Observe further that the subsheaf $(\alpha_\varepsilon)\subseteq \Omega^1_{X[\varepsilon]|\NN}(\V)$ does not depend on the choice of $\U$: if $\U'\to \V$ is another trivialization and $\alpha_\varepsilon^1$, $\alpha_\varepsilon^2$ are the elements arising from the construction above, then on $\U''=\U\cap\U'$ we have
\[\overline{F}^*(\alpha_\varepsilon^1)=u_\varepsilon \overline{F}^*(\alpha_\varepsilon^2) \]
for some unit $u_\varepsilon$.
Since $F$ has connected fibers, we may assume that this intersection is non-empty. The equation above implies that $u=\overline{F}^*(v)$ for some unit $v$ in $F(\U'')$ and therefore both differential forms define the same submodule.
This construction yields a locally free rank one subsheaf of $\L_\varepsilon^{-1}\subseteq \Omega^1_{X_r[\varepsilon]|\NN}\vert_{\W[\varepsilon]}$, which is induced by a section $\alpha_\varepsilon \in H^0(\W[\varepsilon],\Omega^1_{X_r[\varepsilon]|\NN}\otimes \L_\varepsilon)$. 

We will now show that this deformation extends to $X$. Recall that $\omega_\varepsilon$ is of degree $F^*\D$. Moreover, since $\overline{F}^*(\alpha_\varepsilon)=\omega_\varepsilon$ on $F^{-1}(\W)$, we have 
\begin{equation} \label{iso1}\overline{F}^*\L_\varepsilon\vert_{F^{-1}(\W)[\varepsilon]}\simeq p_1^*\O_Y(F^*\D).
\end{equation}
Now let us consider the blow-up  $\pi:Y' \rightarrow Y$ along the base locus of $F$, equipped with a regular extension $F':  Y' \to X$. We will also denote by $\overline{\pi}$ and $\overline{F'}$ the respective trivial deformations. It follows from \cref{iso1} that the pullback under $\overline{F'}$ of $\L_\varepsilon$ along $(F')^{-1}(\W)$ is also a trivial deformation, \emph{i.e.},
\[
\overline{F'}^*(\L_\varepsilon)\vert_{(F')^{-1}(\W)[\varepsilon]}\simeq (\overline{\pi}^*p_1^*\O_Y(F^*\D))\vert_{(F')^{-1}(\W)[\varepsilon]}\simeq \overline{F'}^*\pi_1^*\O_X(\D)\vert_{(F')^{-1}(\W)[\varepsilon]}.
\]
Being the morphism $\overline{F'}: (F')^{-1}(\W)[\varepsilon] \to \W[\varepsilon]$ proper, by projection formula this implies that $\L_\varepsilon\simeq \pi_1^*\O_X(\D)$. This is, $\alpha_\varepsilon$ is in fact a section of $\Omega^1_{X_r[\varepsilon]|\NN}\otimes \pi_1^*\O_X(\D)$ on $\W[\varepsilon]$ of the form $\alpha_\varepsilon=\alpha + \varepsilon \eta$ for some $\eta\in H^0(\W,j_*(\Omega^1_{X_r}\otimes\O_X(\D)))$. Since $\codim(X\setminus \W)\geq 2$, we can extend $\alpha_\varepsilon$ to a degree $\D$ first order deformation of $\alpha$ on $X$ and the theorem follows.
\end{proof}

Now we shall characterize the deformations of the form $F_\varepsilon^*(\alpha)$. For this, we recall from \cite[Section 3.1, p.~1598]{moli} the definition of the module of graded projective unfoldings. Unless stated otherwise, we consider $\omega\in \F_1(\P^n,\ell)$.

\begin{definition}\label{action} We define the $S$-module of \emph{graded projective unfoldings} of $\omega$ as
\[
\UU(\omega) = \left\{(h,\eta)\in S\oplus \Omega^1_S \,:\ L_R(h)\, d\omega = L_R(\omega)\wedge(\eta - dh) \right\}\big/ S.(0,\omega).
\]
For $r\in \N$, the homogeneous component of degree $r$ can be written as
\begin{equation}\label{ecu-10}
\UU(\omega)(r) = \left\{(h,\eta)\in S_r\oplus \Omega_S^1\,(r): \ r\ h\, d\omega = \ell \, \omega\wedge(\eta - dh) \right\}\big/ S_{r-\ell}.(0,\omega).
\end{equation}
For $(h,\eta)\in \UU(\omega)(r)$ and $f\in S_{s}$, the graded \emph{$S$-module structure} is defined by
\begin{equation*}
f\cdot (h,\eta) = \left( fh,\  \tfrac{(r+s)}{r}\ f\eta \ +\ \tfrac{1}{r}\left(r\ h\ df- s\ f\ dh\right)\ \right)\in\UU(\omega)(r+s).
\end{equation*}

\end{definition}

\begin{remark}\label{rem7} By \cite[Proposition 3.2, p.~1598]{moli} every pair $(h,\eta)\in \UU(\omega)(r)$ belongs to $H^0(\P^n,(\O_{\P^n}\oplus \Omega^1_{\P^n})(r))$. Then $\UU(\omega)(\ell)$ coincides with $U(\omega)$, see \cref{defU}.
\end{remark}

\begin{definition}\label{idealI} Let $\pi_1:\UU(\omega)\to S$ be the projection to the first coordinate. We define the \emph{unfoldings ideal} associated to $\omega$ as 
\[
\begin{aligned}
I(\omega) &= \pi_1(\UU(\omega)) = \left\{ h\in S:\  h\, d\omega = \omega\wedge\widetilde{\eta}\text{ for some }\widetilde{\eta}\in \Omega_{S}^1\right\}.\\ 
\end{aligned}
\]
\end{definition}

\begin{remark}\label{rem6}
It is shown in \cite[Proposition 3.6, p.~1599]{moli} that $\pi_1$ is actually an isomorphism between the graded modules $\UU(\omega)$ and $I(\omega)$  whenever $\codim(Sing(\omega))\geq 2$. Then, the space of unfoldings of $\omega$ is completely determined by its ideal. \end{remark}

By \cite[Proposition 4.7, p.~1035]{MMQ} we get the following chain of inclusions.
\begin{proposition}\label{incl2}
With the notations above, we have $J(\omega)\subseteq I(\omega)\subseteq K(\omega)$.
\end{proposition}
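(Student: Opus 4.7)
The statement follows from two direct verifications, and both inclusions are essentially algebraic consequences of the integrability $\omega\wedge d\omega=0$ and the definitions.

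For the inclusion $J(\omega)\subseteq I(\omega)$, the plan is to use the characterization $J(\omega)=\{\imath_Y(\omega) : Y\in T_S\}$ given right after Definition \ref{singscheme}. Pick $h=\imath_Y(\omega)$ for some polynomial vector field $Y$, and apply the contraction $\imath_Y$ to the integrability relation. Since $\imath_Y$ is a graded derivation of degree $-1$,
\[
0=\imath_Y(\omega\wedge d\omega)=\imath_Y(\omega)\cdot d\omega-\omega\wedge \imath_Y(d\omega),
\]
so with $\widetilde{\eta}:=\imath_Y(d\omega)$ one obtains $h\,d\omega=\omega\wedge \widetilde{\eta}$. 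By Definition \ref{idealI} this exhibits $h\in I(\omega)$. Since generators of $J(\omega)$ land in $I(\omega)$ and $I(\omega)$ is an ideal, the inclusion follows.

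For the inclusion $I(\omega)\subseteq K(\omega)$, the plan is to compare coefficients on both sides of the defining relation. Take $h\in I(\omega)$ together with $\widetilde{\eta}=\sum_{i} B_i\,dz_i$ satisfying $h\,d\omega=\omega\wedge\widetilde{\eta}$. Writing $\omega=\sum_i A_i\,dz_i$ and expanding,
\[
\omega\wedge\widetilde{\eta}=\sum_{i<j}(A_iB_j-A_jB_i)\,dz_i\wedge dz_j,
\]
so every coefficient of $\omega\wedge\widetilde{\eta}$ belongs to $J(\omega)=\langle A_1,\ldots,A_m\rangle$. Matching this with the coefficient expansion of $h\,d\omega$ shows that $h$ times each polynomial coefficient of $d\omega$ lies in $J(\omega)$; that is, $h\cdot \II(d\omega)\subseteq J(\omega)$. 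By the reformulation of $K(\omega)$ in Equation \eqref{Kbis}, this is exactly the statement $h\in K(\omega)$.

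There is no real obstacle here; both steps amount to formal manipulation of the defining identities, once one recognizes that Definitions \ref{singscheme}, \ref{kupkaideal} and \ref{idealI} are tailored so that integrability feeds directly into the first inclusion and the ideal-quotient description \eqref{Kbis} feeds directly into the second. The only mildly delicate point is keeping track of the fact that $\II(d\omega)$ is the ideal generated by \emph{all} coefficients of $d\omega$, which is why it is important to extract the coefficient identities componentwise rather than treating $d\omega$ as a single element.
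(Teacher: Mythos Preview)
Your proof is correct. Both inclusions are verified exactly as they should be: contracting the integrability relation $\omega\wedge d\omega=0$ by a vector field yields $J(\omega)\subseteq I(\omega)$, and reading off coefficients in $h\,d\omega=\omega\wedge\widetilde{\eta}$ gives $h\cdot\II(d\omega)\subseteq J(\omega)$, hence $I(\omega)\subseteq K(\omega)$ via \eqref{Kbis}.

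The paper itself does not argue this proposition at all; it simply invokes \cite[Proposition~4.7, p.~1035]{MMQ}. Your argument is precisely the standard proof of that cited result, so in content there is no divergence---you have just unpacked the reference. One cosmetic point: the ambient variety here is $\P^n$, so the coordinates would more naturally be written $x_0,\dots,x_n$ rather than $z_1,\dots,z_m$, but this does not affect the argument.
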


Let $X$ be a simplicial complete toric surface, $\alpha\in H^0(X,j_*(\Omega^1_{X_r} \otimes \O_{X}(\D)\vert_{X_r}))$ of the form $\alpha=\sum_{i=1}^m A_i(z)dz_i $ and $F:\P^n\dashrightarrow X$ be a dominant rational map with a complete polynomial lifting of degree $\bar{e}$. In the following proposition  we compute the unfoldings ideal $I(\omega)$ for $\omega = F^{\ast}(\alpha) \in\F_1(\P^n,\ell)$ under certain generic assumptions.

\begin{lemma} With the notation above, we have 
\[ \langle A_1(F),\dots, A_m(F) \rangle \subseteq I(\omega) .\]
\end{lemma}
\begin{proof} This follows directly from \cref{cuenta}.
\end{proof}
\begin{proposition}\label{IFw}
Suppose $(F,\alpha)$ is affinely generic and $F^*:S_X\to S_{\P^n}$ is flat. Then 
\[
I(\omega)=K(\omega)=\langle A_1(F),\ldots
,A_m(F)\rangle\ .
\]
\end{proposition}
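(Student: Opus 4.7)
The chain $J(\omega) \subseteq I(\omega) \subseteq K(\omega)$ from Proposition \ref{incl2} together with Theorem \ref{kf}, which under the generic pair and flatness assumptions gives $K(\omega) = \widetilde{J}(\omega) = \langle A_1(F),\dots,A_m(F)\rangle$, reduces the proposition to the single containment $\widetilde{J}(\omega) \subseteq I(\omega)$. So the plan is to exhibit, for each $i = 1,\dots,m$, an explicit Kähler $1$-form $\widetilde{\eta}_i$ such that $A_i(F)\,d\omega = \omega \wedge \widetilde{\eta}_i$, which by Definition \ref{idealI} forces $A_i(F) \in I(\omega)$.

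The strategy is to first build the unfolding upstairs on $X$ and then transport it to $\P^n$ via $F^*$. Starting from Frobenius integrability $\alpha \wedge d\alpha = 0$ (the equality of Remark \ref{conditionsA}(c), holding in $\bigwedge^3 \Omega^1_S$ at the level of polynomial coefficients) and contracting with $\partial/\partial z_i$, one obtains
\[
A_i\, d\alpha \;=\; \alpha \wedge \iota_{\partial/\partial z_i}(d\alpha).
\]
Cartan's magic formula $L_{\partial/\partial z_i}\alpha = d(\iota_{\partial/\partial z_i}\alpha) + \iota_{\partial/\partial z_i}(d\alpha) = dA_i + \iota_{\partial/\partial z_i}(d\alpha)$ rewrites this as the unfolding identity
\[
A_i\, d\alpha \;=\; \alpha \wedge \bigl(L_{\partial/\partial z_i}\alpha - dA_i\bigr).
\]

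Applying the ring homomorphism $F^*:S_X\to S_{\P^n}$, which commutes with $d$ and with $\wedge$, gives
\[
A_i(F)\, d\omega \;=\; \omega \wedge \widetilde{\eta}_i, \qquad \widetilde{\eta}_i \;=\; F^*\bigl(L_{\partial/\partial z_i}\alpha - dA_i\bigr) \in \Omega^1_{S_{\P^n}},
\]
which is precisely the relation certifying $A_i(F) \in I(\omega)$. Running this over all $i$ yields $\widetilde{J}(\omega) \subseteq I(\omega)$, and combining with the chain from Proposition \ref{incl2} and Theorem \ref{kf} completes the proof.

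The step I expect to be the main (formal) obstacle is making sure the pullback manipulation is rigorous: $\widetilde{\eta}_i$ need not satisfy the toric descent conditions, but this is unproblematic because Definition \ref{idealI} only requires $\widetilde{\eta}$ to live in the Kähler differentials of the coordinate ring, and $F^*$ sends Kähler differentials of $S_X$ to Kähler differentials of $S_{\P^n}$ in the standard way. All the genuine content -- namely the equality $K(\omega) = \widetilde{J}(\omega)$ and the fact that no spurious components appear along $C(F,\alpha)$ or the base locus of $F$ -- has already been absorbed into Theorem \ref{kf} through the genericity and flatness hypotheses, so this final step is essentially a formal consequence of integrability together with the functoriality of $F^*$.
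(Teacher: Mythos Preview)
Your proof is correct and follows essentially the same route as the paper: reduce to $\widetilde{J}(\omega)\subseteq I(\omega)$ via Theorem \ref{kf} and Proposition \ref{incl2}, then contract $\alpha\wedge d\alpha=0$ with $\partial/\partial z_i$ and pull back by $F^*$. The only cosmetic difference is your detour through Cartan's formula to rewrite $\iota_{\partial/\partial z_i}(d\alpha)$ as $L_{\partial/\partial z_i}\alpha - dA_i$; the paper simply uses $\widetilde{\eta}_k = F^*(\iota_{\partial/\partial z_k}d\alpha)$ directly.
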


\begin{proof} Just combine \cref{kupkagen}, the lemma above and \cref{incl2}.
\end{proof}

\begin{theorem}\label{pullback}
Suppose $(F,\alpha)$ is a affinely generic and $F^*:S_X\to S_{\P^n}$ is flat. Let  $\omega_\varepsilon$ be a first order deformation of $\omega$. Then, $\omega_\varepsilon$ is induced by a first order unfolding if and only if there exist homogeneous polynomials $G_j$ of degrees $\deg(G_j)=\deg(F_j)$ such that 
\[ \omega_\varepsilon=(F+\varepsilon G)^\ast(\alpha).\]
\end{theorem}

\begin{proof}
First, observe that for any $G_1,\dots, G_m$ by straightforward calculation the first order deformation $(F+\varepsilon G)^*(\alpha)$ can be expressed as
\[
(F+\varepsilon G)^\ast(\alpha)=\omega + \varepsilon \left(\sum_{i=1}^m \sum_{j=1}^m \frac{\partial A_i}{\partial z_j}(F)G_j dF_i + \sum_{i=1}^m A_i(F) dG_i \right)\in  H^0(\PP^n[\varepsilon],\Omega^1_{\PP^n[\varepsilon]|\NN}(\ell))\ .
\]
Notice that a first order deformation $\omega_\varepsilon=\omega + \varepsilon \eta$ is induced by a first order unfolding, in the sense of \cref{unfdef}, if and only if there exists some $G \in S_{\P^n}$ of degree $\ell$ such that $(G,\eta) \in U(\omega)= \UU(\omega)(\ell)$. Recall from \cref{rem6} that $\UU(\omega)\simeq I(\omega)$ as graded $S_{\P^n}$-modules. Since $(F,\alpha)$ is affinely generic, by \cref{IFw} we have $ I(\omega) = \langle A_k(F) \rangle_{k=1}^m$.

We proceed to find  $\eta_k$ such that $(A_k(F),\eta_k)\in \UU(\ell-e_k)$.
By \cref{cuenta}, if
\[
\widetilde{\eta}_k = F^*(\ii_{\partial/\partial z_k}d\alpha)=  \sum_{j\neq k} \left( \frac{\partial A_j}{\partial z_k}(F)-\frac{\partial A_k}{\partial z_j}(F)\right)\ dF_j\ ,
\]
then we have $A_k(F) d\omega= \omega\wedge \widetilde{\eta}_k$. Now we need to determine $\eta_k$ satisfying the equation
\[
(\ell-e_k) \ A_k(F) \ d\omega = \ell \ \omega\wedge (\eta_k-dA_k(F))\ .
\]
Thereby, the relation between $\widetilde{\eta}_k$ and $\eta_k$ is given by $\eta_k = \frac{\ell-e_k}{\ell}\ \widetilde{\eta}_k+dA_k(F)$. This implies 
\[
\begin{aligned}
\eta_k &= \frac{\ell-e_k}{\ell}\ \sum_{j\neq k} \left(\frac{\partial A_j}{\partial z_k}(F)-\frac{\partial A_k}{\partial z_j}(F)\right)\ dF_j+dA_k(F) \ .
\end{aligned}
\]
Since $\UU(\omega)\simeq I(\omega)$ as graded $S_{\P^n}$-modules, it follows that $\omega_\varepsilon$ is induced by a first order unfolding if and only if
\[
\eta = \sum_{k=1}^m \pi_2(G_k\cdot (A_k(F), \eta_k)) 
\]
for some $G_k \in S_{\P^n}$ of degree $e_k$. Finally, using the module structure of \cref{action} the following calculation implies our claim:
\begin{align*}
\pi_2(G_k\cdot(A_k(F), \eta_k)) = \frac{\ell}{\ell-e_k}G_k \eta_k + \frac{1}{\ell-e_k}\left((\ell-e_k)A_k(F) dG_k - e_k G_k dA_k(F)\right) =& \\
= \frac{\ell}{\ell-e_k}G_k \left(\frac{\ell-e_k}{\ell}\ \eta_k+dA_k(F)\right)+ \frac{1}{\ell-e_k}\left((\ell-e_k)A_k(F) dG_k - e_k G_k dA_k(F)\right) = &\\
= G_k\eta_k+G_kdA_k(F)+A_k(F)dG_k= \sum_{j=1}^m  \frac{\partial A_j}{\partial z_k}(F) G_k \ dF_j + A_k(F) dG_k \ .&
\end{align*}
\end{proof}

We end this section by describing some properties of $I(\omega)$ under less restrictive assumptions on the pair $(F,\alpha)$.  
We refer the reader to \cite[Remark 3.4, p.~8]{MMQ2} for a sheaf-theoretic version of the following definition.

\begin{definition}\label{unftorico}
Let $X$ be a simplicial complete toric variety and $\alpha\in \F_1(X,\D)$. We  define the \emph{unfoldings ideal} of $\alpha$ as
\[
I(\alpha) = (\alpha\wedge \,\Omega_{S}^1:d\alpha)=\{h\in S: hd\alpha = \alpha \wedge\widetilde{\eta}, \text{ for some }\widetilde{\eta}\in \,\Omega_{S}^1\}\ .
\]
\end{definition}
\begin{remark}\label{remunfoldingsaturado}
As in \cref{incl2}, we have that $J(\alpha)\subset I(\alpha)\subset K(\alpha)$. Consequently, straightforward computation shows
\begin{equation}\label{inclusions}
J(\omega) \subset F^\ast(J(\alpha)) \subset F^\ast (I(\alpha)) \subset I(\omega) \subset K(\omega). 
\end{equation}
\end{remark}
Now, we introduce a slight variant of \cite[Definition 4.8, p.~1035]{MMQ}.

\begin{definition}
We say that a prime ideal $\mathfrak{p}\subset S$ is a \emph{division point} of $\alpha$ if $1\in I(\alpha)_{\mathfrak{p}}$.
\end{definition}

These points are exactly the primes $\mathfrak{p}$ around which $d\omega_\mathfrak{p}$ is divisible by $\omega_\mathfrak{p}$, \emph{i.e.}, there exists a local $1$ form $\eta$ around $\mathfrak{p}$ such that  $d\omega_\mathfrak{p}=\omega_\mathfrak{p} \wedge \eta $. Following \cite[Remark 4.11, p.~1035]{MMQ} it is reasonable to assume that every $\mathfrak{p}$ outside $\K(\omega)$ is a division point of $\omega$.
The next result provides further information on the inclusions in \cref{inclusions} under weaker hypotheses on the differential form  $\alpha$.
\begin{proposition}\label{propultima}
Let $X$ be a simplicial complete toric surface. With the notation above, suppose the pair $(F,\alpha)$ is generic, $F^*:S_X\to S_{\P^n}$ is flat and $I_{ij}$ is a  division point of $\alpha$ for  every $(i,j) \in \Gamma_{\alpha}\backslash \Gamma_{\alpha,\K}$.   Then,
\begin{enumerate}
\item[1)]  $(I(\omega):F^\ast(I_Z)^\infty) =  (K(\omega):F^\ast(I_Z)^\infty)$ defines the same closed  subscheme as $(F^\ast(J(\alpha)):F^\ast(I_Z)^\infty)$ and
\item[2)] $\sqrt{F^\ast(I(\alpha))} = \sqrt{ I(\omega)} = \sqrt{ K(\omega)}$.
\end{enumerate}
\end{proposition}

\begin{proof}
Since $(F,\alpha)$ is generic ,  we have  $(J(\alpha):I_Z^{\infty}) = (I(\alpha):I_Z^{\infty}) = (K(\alpha):I_Z^{\infty})$. 
Being $F^\ast$  flat and using again
\cite[Chap. 1, 2, Remarque, p.~41]{B}, it follows that
\begin{equation}\label{inclusionespullback}
(F^{\ast} (J(\alpha)):F^{\ast}(I_Z)^{\infty}) = (F^{\ast} (I(\alpha)):F^{\ast} (I_Z)^{\infty}) = ({F^\ast} (K(\alpha)):F^{\ast} (I_Z)^{\infty}).
 \end{equation}
Let $I$ be the irrelevant ideal of $\P^n$. By \cite[Proposition 3.1, p.~863]{CAMQ} we know that $(I(\omega):I^{\infty}) = I(\omega)$.
Then by \cref{inclusions} we also get
\begin{align}\label{inclusionescocientes}
((F^\ast(J(\alpha)):F^\ast(I_Z)^{\infty}):I^{\infty})  
\subset(I(\omega):F^*(I_Z)^{\infty}) \subset (K(\omega):F^*(I_Z)^{\infty}) \subset \\ \notag \subset ((K(\omega):F^*(I_Z)^{\infty}):I^{\infty}).
\end{align}
Recall from  \cref{kupkascheme}  that  $F^\ast(K(\alpha))$ and $K(\omega)$ define the same closed subscheme. This fact combined with \cref{inclusionespullback} implies that
\[
 ((F^\ast(J(\alpha)):F^\ast(I_Z)^{\infty}):I^{\infty}) = ((K(\omega):F^*(I_Z)^{\infty}):I^{\infty}),
\]
and then our first claim follows from \cref{inclusionescocientes}.

Finally, since  $I_{ij}$ is a  division point of $\alpha$ for every $(i,j) \in \Gamma_{\alpha}\backslash \Gamma_{\alpha,\K}$, the associated primes of $I(\alpha)$ and $K(\alpha)$ containing $I_Z$ are the same. Therefore, by \cref{singcomponents} the primes in the support of $F^\ast(I(\alpha))$, $I(\omega)$  and $K(\omega)$ containing $F^\ast(I_Z)$ coincide as well, which implies our second claim.
\end{proof}

{\tiny

}

\

{\tiny
\noindent
\begin{tabular}{l l}
	Javier Gargiulo Acea$^*$ \hspace{2cm}\null&\textsf{jngargiulo@gmail.com}\\
	Ariel Molinuevo$^\dag$  &\textsf{arielmolinuevo@gmail.com}\\
	Sebasti\'an Velazquez$^\ddag$  &\textsf{velaz.sebastian@gmail.com}\\
\end{tabular}}

\

\

{\tiny
\noindent
\begin{tabular}{l l l}
	$^*${IMPA} & \hspace{1cm}$^\dag$Instituto de Matemática& $^\ddag$ {Departamento de Matem\'atica}  \\
	{Estrada Dona Castorina 110} &\hspace{1cm}Av. Athos da Silveira Ramos 149 & {Pabell\'on I, Ciudad Universitaria} \\
	{Jardim Botanico} &\hspace{1cm}Bloco C, Centro de Tecnologia, UFRJ  & {FCEyN, UBA}\\
	{CEP 22460-320 } &\hspace{1cm}Cidade Universitária, Ilha do Fund\~ao & {CP C1428EGA} \\ 
	{Rio de Janeiro, RJ}&\hspace{1cm}CEP 21941-909  &  {Buenos Aires}  \\
	{Brasil}  &\hspace{1cm}Rio de Janeiro, RJ &{Argentina}   \\
	&\hspace{1cm}Brasil  \\
\end{tabular}}

\end{document}